\newcommand{\Z}{\mathbb{Z}}
\newcommand{\R}{\mathbb{R}}
\newcommand{\C}{\mathbb{C}}
\newcommand{\Rr}{\mathscr{R}}
\newcommand{\LL}{\mathscr{L}}
\newcommand{\EE}{\mathscr{E}}
\newcommand{\VV}{\mathscr{V}}
\newcommand{\Sy}{\mathfrak{S}}
\newcommand{\GL}{\mathrm{GL}}
\newcommand{\SL}{\mathrm{SL}}
\newcommand{\SO}{\mathrm{SO}}
\newcommand{\Sp}{\mathrm{Sp}}
\newcommand{\Cusp}{\mathscr{C}}
\newcommand{\Rep}{\mathrm{Rep}}
\newcommand{\Irr}{\mathrm{Irr}}
\newcommand{\temp}{\mathrm{temp}}
\newcommand{\Ind}{\mathrm{Ind}}
\newcommand{\Jac}{\mathrm{Jac}}
\newcommand{\supp}{\mathrm{supp}}
\newcommand{\sgn}{\mathrm{sgn}}
\newcommand{\id}{\mathrm{id}}
\newcommand{\iif}{&\quad&\text{if }}
\newcommand{\other}{&\quad&\text{otherwise}}
\newcommand{\resp}{resp.~}
\renewcommand{\1}{\mathbf{1}}
\newcommand{\ep}{\varepsilon}
\newcommand{\less}{\prec}
\newcommand{\s}{\mathfrak{s}}
\newcommand{\half}[1]{\frac{#1}{2}}
\newcommand{\ub}[1]{\underline{#1}}
\newtheorem{thm}{Theorem}[section]
\newtheorem{lem}[thm]{Lemma}
\newtheorem{prop}[thm]{Proposition}
\newtheorem{cor}[thm]{Corollary}
\newtheorem{rem}[thm]{Remark}
\newtheorem{defi}[thm]{Definition}
\newtheorem{ex}[thm]{Example}
\newenvironment{talign*}
{\csname align*\endcsname}
{\endalign}
\title{An analogue of ladder representations for classical groups}
\author{Hiraku Atobe}
\date{}
\subjclass[2020]{Primary 22E50; Secondary 11S70}
\keywords{Ladder representations; Determinantal formula}
\address{
Department of Mathematics, Hokkaido University,
Kita 10, Nishi 8, Kita-Ku, Sapporo, Hokkaido, 060-0810, Japan 
}
\email{
atobe@math.sci.hokudai.ac.jp
}
\begin{document}
\maketitle

\begin{abstract}
In this paper, 
we introduce a notion of \emph{ladder representations} 
for split odd special orthogonal groups and symplectic groups 
over a non-archimedean local field of characteristic zero. 
This is a natural class in the admissible dual which contains both 
strongly positive discrete series representations 
and irreducible representations with irreducible $A$-parameters.
We compute Jacquet modules and the Aubert duals of ladder representations, 
and we establish a formula to describing ladder representations 
in terms of linear combinations of standard modules.
\end{abstract}

\section{Introduction}
Let $F$ be a non-archimedean local field of characteristic zero with the normalized absolute value $|\cdot|$.  
For $n \geq 0$, 
we set $G_n$ to be a split special orthogonal group $\SO_{2n+1}(F)$ or a symplectic group $\Sp_{2n}(F)$. 
Denote by $\Rr(\GL_n(F))$ (\resp $\Rr(G_n)$)
the Grothendieck group of the category of smooth representations of $\GL_n(F)$ (\resp $G_n$) of finite length. 
For an object $\pi$ of this category, 
we denote by $[\pi]$ the corresponding element of the Grothendieck group. 
It is known that $\Rr(\GL_n(F))$ (\resp $\Rr(G_n)$) has two natural bases. 
One consists of the \emph{irreducible representations} and the other consists of the \emph{standard modules}.
The change of basis matrix is triangular and unipotent (in an appropriate sense). 
At least for the general linear groups case, 
as predicted by Zelevinsky \cite{Z2,Z3} and proven by Chriss--Ginzburg \cite{CG}, 
this matrix is written using Kazhdan--Lusztig polynomials. 
It is very complicated to compute this matrix in general, 
but in a certain special class of irreducible representations $\pi$, 
it may be relatively simple to describing $[\pi]$ in terms of a linear combination of standard modules. 
\vskip 10pt

As an example, let us recall the \emph{ladder representations} of $\GL_n(F)$. 
In Introduction, we only consider unipotent representations.
Let 
\[
[x,y] \coloneqq \{|\cdot|^x, |\cdot|^{x-1}, \dots, |\cdot|^y\}
\]
be a segment with $x,y \in \R$ such that $x \geq y$ and $x-y \in \Z$. 
The parabolically induced representation 
\[
|\cdot|^x \times |\cdot|^{x-1} \times \dots \times |\cdot|^y
\]
of $\GL_{x-y+1}(F)$
has a unique irreducible subrepresentation, 
which is denoted by $\Delta[x,y]$ and is called a \emph{Steinberg representation}. 
For the notation of parabolically induced representations of $\GL_n(F)$, see \S \ref{rep0} below. 
We also write $\Delta[x,x+1] \coloneqq \1_{\GL_0(F)}$ and $\Delta[x,y] \coloneqq 0$ for $y > x+1$.
When segments $[x_1,y_1], \dots, [x_t,y_t]$ satisfy that 
$x_1+y_1 \leq \dots \leq x_t+y_t$, 
the parabolically induced representation
\[
\Delta[x_1,y_1] \times \dots \times \Delta[x_t,y_t]
\]
is called a \emph{standard module}. 
It has a unique irreducible subrepresentation, 
which we denote by $L(\Delta[x_1,y_1], \dots, \Delta[x_t,y_t])$. 
As a natural generalization of Steinberg representations, 
following Kret--Lapid \cite{KL} and Lapid--M\'inguez \cite{LM},
we define a (unipotent) \emph{ladder representation}
by an irreducible representation of the form $L(\Delta[x_1,y_1], \dots, \Delta[x_t,y_t])$
such that $x_1<\dots<x_t$, $y_1<\dots<y_t$ and $x_1 \equiv \dots \equiv x_t \bmod \Z$.
Moreover, we call $L(\Delta[x_1,y_1], \dots, \Delta[x_t,y_t])$ a \emph{(shifted) Speh representation}
if $x_{i+1} = x_i+1$ and $y_{i+1} = y_i+1$ for $i=1,\dots,t-1$.
\vskip 10pt

Let $\Sy_t$ be the symmetric group on $\{1,\dots,t\}$ with the sign character $\sgn \colon \Sy_t \rightarrow \{\pm1\}$. 
The following is called a \emph{determinantal formula}. 
\begin{thm}[Lapid--M{\'i}nguez {\cite[Theorem 1 (ii)]{LM}}]\label{det1}
Let $\pi = L(\Delta[x_1,y_1], \dots, \Delta[x_t,y_t])$ 
be a ladder representation of $\GL_n(F)$. 
Then in the Grothendieck group $\Rr(\GL_n(F))$, an equality
\[
[\pi] = \sum_{\sigma \in \Sy_t}\sgn(\sigma) 
\Big[\Delta[x_1,y_{\sigma(1)}] \times \dots \times \Delta[x_t,y_{\sigma(t)}]\Big]
\]
holds.
\end{thm}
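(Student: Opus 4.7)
The plan is to argue by induction on the number of segments $t$. The base case $t=1$ is immediate: the only permutation is the identity, and both sides equal $[\Delta[x_1,y_1]]$.

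For the inductive step, I would apply the Jacquet functor $\Jac_P$ with respect to a well-chosen maximal parabolic $P$ of $\GL_n(F)$---for instance, the one whose Levi is $\GL_{x_1-y_1+1}(F) \times \GL_m(F)$, cutting off the top segment $\Delta[x_1,y_1]$---to both sides of the identity and compare. On the left, the known structure of Jacquet modules of ladder representations (due to Lapid--M\'inguez and Kret--Lapid) should give
\[
[\Jac_P(\pi)] = [\Delta[x_1,y_1]] \otimes [\pi'] + (\text{controlled lower-order terms}),
\]
where $\pi' = L(\Delta[x_2,y_2],\dots,\Delta[x_t,y_t])$ is the ladder obtained by removing the first segment. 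On the right, the geometric lemma expands the Jacquet module of each standard module $\Delta[x_1,y_{\sigma(1)}] \times \dots \times \Delta[x_t,y_{\sigma(t)}]$ as a sum of ``shuffles'' splitting each sub-segment into two pieces. The key combinatorial step is to show that, once the signs $\sgn(\sigma)$ are incorporated, almost everything cancels pairwise, leaving exactly the alternating sum over $\Sy_{t-1}$ that the induction hypothesis identifies with $[\pi']$.

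The main obstacle will be controlling this cancellation. The geometric lemma produces many more terms than the target formula requires, and one needs an explicit sign-reversing involution on pairs (permutation, shuffle) to pair them up. In spirit this is a BGG-resolution style argument: the alternating sum on the right should be viewed as the Euler characteristic of a complex resolving $\pi$ by standard modules, so the task is to show that every cohomology but $\pi$ itself vanishes.

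A potentially cleaner alternative is to work directly at the level of cuspidal supports. Since standard modules form a basis of $\Rr(\GL_n(F))$ and the change-of-basis matrix to irreducibles is unitriangular in the Zelevinsky order, it suffices to verify that the right-hand side has coefficient $1$ at $\pi$ and coefficient $0$ at every irreducible strictly below $\pi$. The ladder hypothesis $x_1 < \dots < x_t$, $y_1 < \dots < y_t$ (with all exponents in the same residue class) should make this vanishing particularly tractable, reducing the identity to a combinatorial statement about the multisets of cuspidal exponents $\{|\cdot|^{y_{\sigma(i)}}, \dots, |\cdot|^{x_i}\}$ appearing on the right-hand side.
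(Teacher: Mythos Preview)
The paper does not give its own proof of this theorem: it is quoted as a known result of Lapid--M{\'i}nguez \cite[Theorem~1~(ii)]{LM} and serves only as motivation for the paper's main result, Theorem~\ref{determinantal}, which is the analogue for classical groups. So strictly speaking there is no proof here to compare against.

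That said, it is instructive to compare your outline with the method the paper uses to prove Theorem~\ref{determinantal}. Both are inductive arguments that reduce the identity via Jacquet-type operations and then exhibit a sign-reversing involution to kill unwanted terms. But the inductions are organised differently. The paper inducts on the rank $n$ and uses \emph{single-exponent derivatives} $D_{\rho|\cdot|^{\lambda}}$: for each $\lambda$ one shows either that $D_{\rho|\cdot|^{\lambda}}$ annihilates both sides (Lemma~\ref{lem1}, via an explicit involution on $\Sy(\LL)$), or that $D_{\rho|\cdot|^{\lambda}}$ sends both sides to the formula for a strictly smaller ladder (Lemma~\ref{lem2}). Since agreement of all derivatives forces equality in the Grothendieck group, this closes the induction. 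Your proposal instead inducts on the number of segments $t$ and applies $\Jac_P$ for a maximal parabolic cutting off an entire segment $\Delta[x_1,y_1]$.

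There is a real difficulty with your version as written. The Jacquet module of a ladder representation along the parabolic with Levi $\GL_{x_1-y_1+1}(F)\times\GL_m(F)$ does \emph{not} have the simple shape $[\Delta[x_1,y_1]]\otimes[\pi']$ plus negligible terms: by Kret--Lapid it is a sum of tensor products of smaller ladders, one for each way of slicing all $t$ segments simultaneously, and the piece you want is just one of many. So the induction on $t$ does not peel off cleanly in one step, and the ``controlled lower-order terms'' are where all the work lies. The derivative approach avoids this because $D_{\rho|\cdot|^{\lambda}}$ removes a single point from the multisegment, making the involution on the right-hand side (swap two adjacent endpoints) transparent. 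Your alternative suggestion via unitriangularity in the Zelevinsky order is closer in spirit to the original argument in \cite{LM}, but again the paper itself does not pursue that line.
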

When $\pi$ is a Speh representation, 
the determinantal formula was early proven by Tadi{\'c} \cite{T-det} 
and its proof was simplified by Chenevier--Renard \cite{CR}.
There are several applications for the determinantal formula. 
For example: 

\begin{itemize}
\item
The determinantal formula for Speh representations
was used in the critical final step of the global classification of Arthur's endoscopic classification 
(see \cite[\S 7.5, \S 8.2]{Ar}).

\item
Using Theorem \ref{det1}, 
Tadi\'c computed the full derivative (in the sense of Zelevinsky) of ladder representations 
(see \cite[Theorem 14]{LM}). 

\item
M\'inguez and S\'echerre established 
another determinantal formula to extend the mod $\ell$ Jacquet--Langlands correspondence 
to all irreducible representations (see \cite[Proposition 1.15, Th{\'e}or{\`e}me 1.16]{MS}).

\item
Theorem \ref{det1} was one of the key ingredients of 
the theory of local newforms for non-generic representations of $\GL_n(F)$
established by the author together with Kondo and Yasuda (see \cite[\S 6]{AKY}).
\end{itemize}
\vskip 10pt

Let us turn our attention to the classical group $G_n$. 
An analogue of Speh representations 
should be irreducible representations with \emph{irreducible $A$-parameters}. 
Such representations are unitary and can be realized as local components of 
discrete spectrum of automorphic forms.
For more precision, see \cite{Ar}.
However, as is the same as Speh representations, 
the class of irreducible representations of $G_n$ 
with (irreducible) $A$-parameters for all $n \geq 0$
is not preserved by derivatives (in the sense of Jantzen--M\'inguez).
Note that every ladder representation of $\GL_n(F)$ 
can be obtained from a Speh representation of $\GL_{n'}(F)$ for $n' \geq n$ by derivatives. 
Inspired by M{\oe}glin's construction of irreducible representations with $A$-parameters 
(see \cite{X2}), we will define a notion of \emph{ladder representations} of $G_n$
in Definition \ref{def:ladder}. 
Here, we only state the \emph{unipotent} ladder representations of $\Sp_{2n}(F)$ for simplicity. 

\begin{defi}
A \emph{unipotent ladder datum} for $G_n = \Sp_{2n}(F)$
is a triple $\LL = (X,l,\eta)$ satisfying the following conditions. 
\begin{enumerate}
\item
$X = \{|\cdot|^{x_1}, |\cdot|^{x_2}, \dots, |\cdot|^{x_t}\}$
with 
\begin{itemize}
\item
$x_i \in \Z$; 
\item
$x_1<x_2<\dots<x_t$;
\item
$x_1+x_2+\dots+x_t+\half{t-1} = n$ (so that $t$ is odd). 
\end{itemize}

\item
$l \in \Z$ such that 
\begin{itemize}
\item
$0 \leq 2l \leq t$; 
\item
$x_j + x_{t-j+1} \geq 0$ for $1 \leq j \leq l$;
\item
$x_i \geq 0$ for $l+1 \leq i \leq t-l$. 
\end{itemize}

\item
$\eta \in \{\pm1\}$ such that $(-1)^{[\half{t}]+l} \eta^t = 1$.
\end{enumerate}
We define $\pi(\LL) \in \Irr(G_n)$ by the unique irreducible subrepresentation of 
the standard module
\[
\Delta[x_1,-x_t] \times \Delta[x_2,-x_{t-1}] \times \dots \times \Delta[x_l,-x_{t-l+1}] \rtimes \pi(\phi,\ep),
\]
where 
\[
\phi \coloneqq \bigoplus_{i=l+1}^{t-l} S_{2x_{i}+1}
\]
and $\ep(S_{2x_i+1}) \coloneqq (-1)^{i-l-1}\eta$ for $l+1 \leq i \leq t-l$. 
We call $\pi(\LL)$ a unipotent \emph{ladder representation} of $G_n = \Sp_{2n}(F)$.
\end{defi}

For the notation of parabolically induced representations and standard modules of $G_n$, 
see \S \ref{rep} and \S \ref{classification}, respectively.
The class of ladder representations of $G_n$ contains 
\begin{itemize}
\item
strongly positive discrete series
(see, e.g., \cite{Ma1} for this notion); and 
\item
irreducible representations with irreducible $A$-parameters. 
\end{itemize}
\vskip 10pt

As an analogue of \cite[\S 3]{LM}, 
for a unipotent ladder datum $\LL$,
we will define a directed $3$-colorable graph $\EE(\LL)$ in Definition \ref{graph}. 
As in Proposition \ref{der}, this graph knows derivatives of $\pi(\LL)$. 
In particular, one can see that
the class of ladder representations of $G_n$ for all $n \geq 0$ is preserved by derivatives.
More generally, we can compute all Jacquet modules of $\pi(\EE)$
by Theorem \ref{jacquet} together with \cite[Lemma 2.6]{At}.
It would help in the unitary dual problem for $G_n$. 
As another application of the graph $\EE(\LL)$, 
one can compute the Aubert dual of $\pi(\LL)$ by Theorem \ref{aubert}. 
\vskip 10pt

Among irreducible representations of the form $L(\Delta[x_1,y_1], \dots, \Delta[x_t,y_t])$
with $x_1,\dots,x_t \in \Z$, 
the ladder representations of $\GL_n(F)$ are characterized so that all Jacquet modules are completely reduced. 
This fact was proven by Ram \cite[Theorem 4.1 (c)]{R} in the context of affine Hecke algebras, 
and by Gurevich \cite[Corollary 4.11]{G} in general. 
In Corollary \ref{semi}, we will see that all Jacquet modules of $\pi(\LL)$ are completely reduced as well. 
However, the converse in an appropriate sense does not hold 
(see Examples \ref{counter} and \ref{counter2}). 
\vskip 10pt

As a main result of this paper, 
we give an analogue of the determinantal formula in Theorem \ref{determinantal}.
For unipotent ladder representations of $G_n = \Sp_{2n}(F)$, it is stated as follows.

\begin{thm}\label{main}
Let $\LL = (X,l,\eta)$ be a unipotent ladder datum for $G_n = \Sp_{2n}(F)$
with $X = \{|\cdot|^{x_1},\dots,|\cdot|^{x_t}\}$ such that $x_1<\dots<x_t$.
Let $\Sy(\LL)$ be the subset of $\Sy_t$ consisting of $\sigma$ such that 
\begin{itemize}
\item
for $1 \leq i < j \leq l$, we have $\sigma(i) < \sigma(j)$; 
\item
for $l+1 \leq i < j \leq t-l$, we have $\sigma(i) < \sigma(j)$; 
\item
if $x_i \leq -1$, then $\sigma^{-1}(i) \leq l$. 
\end{itemize}
For $\sigma \in \Sy(\LL)$, 
set 
\begin{align*}
J_\sigma^+ &\coloneqq \{1 \leq j \leq l \;|\; \sigma(j) < \sigma(t-j+1) \}, \\
J_\sigma^- &\coloneqq \{1 \leq j \leq l \;|\; \sigma(j) > \sigma(t-j+1) \},
\end{align*}
and define a direct sum of standard modules $I_\sigma(\LL)$ by 
\[
I_\sigma(\LL) \coloneqq 
\left(
\bigtimes_{j \in J_{\sigma}^+} \Delta[x_{\sigma(j)}, -x_{\sigma(t-j+1)}]
\right)
\rtimes 
\left(
\bigoplus_{\delta \colon J_\sigma^- \rightarrow \{\pm1\}} \pi(\phi_\sigma, \ep_{\sigma,\delta})
\right),
\]
where $(\phi_\sigma, \ep_{\sigma,\delta})$ is given by 
\[
\phi_\sigma \coloneqq 
\left(
\bigoplus_{i=l+1}^{t-l} S_{2x_{\sigma(i)}+1}
\right)
\oplus 
\left(
\bigoplus_{j \in J_\sigma^-} S_{2x_{\sigma(j)}+1} \oplus S_{2x_{\sigma(t-j+1)}+1}
\right)
\]
and 
\begin{align*}
\ep_{\sigma,\delta}(S_{2x_{\sigma(i)}+1}) &\coloneqq (-1)^{i-l-1}\eta,\\
\ep_{\sigma,\delta}(S_{2x_{\sigma(j)}+1}) &=
\ep_{\sigma,\delta}(S_{2x_{\sigma(t-j+1)}+1}) \coloneqq \delta(j)
\end{align*}
for $l+1 \leq i \leq t-l$ and $j \in J_\sigma^-$.
Then in the Grothendieck group $\Rr(G_n)$, 
an equality 
\[
[\pi(\LL)] = p_{\s}\left(\sum_{\sigma \in \Sy(\LL)} \sgn(\sigma) [I_\sigma(\LL)]\right)
\]
holds, 
where $p_\s \colon \Rr(G_n) \rightarrow \Rr(G_n)$ is the $\Z$-linear extension of 
\[
[\pi] \mapsto \left\{
\begin{aligned}
&[\pi] \iif \text{$\pi$ has the same cuspidal support as $\pi(\LL)$},\\
&0 \other
\end{aligned}
\right. 
\]
for each irreducible representation $\pi$ of $G_n$.
\end{thm}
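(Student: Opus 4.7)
The plan is to mirror the strategy for general linear groups (Lapid--M\'inguez \cite{LM}, Chenevier--Renard \cite{CR}) in the classical group setting, using as primary tools the Jacquet module formula (Theorem \ref{jacquet}) via the graph $\EE(\LL)$, and an induction on $l$, the number of ``long'' segments $\Delta[x_j, -x_{t-j+1}]$ crossing zero. For the base case $l = 0$, the conditions defining $\Sy(\LL)$ force $\sigma = \id$: order-preservation on $\{l+1,\dots,t-l\} = \{1,\dots,t\}$ leaves no freedom and $J_\id^{\pm} = \emptyset$, so $I_\id(\LL) = \pi(\phi, \ep) = \pi(\LL)$ and the identity is tautological.

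For the inductive step, I would exploit the embedding
\[
\pi(\LL) \hookrightarrow \Delta[x_1, -x_t] \rtimes \pi(\LL'),
\]
where $\LL'$ is the datum obtained by removing the extreme indices $1, t$ and decreasing $l$ by one, and apply the inductive hypothesis to $\pi(\LL')$. The right-hand side of the theorem then decomposes as a signed sum of inductions from $\Delta[x_1, -x_t]$ against terms $I_{\sigma'}(\LL')$, plus correction terms coming from those $\sigma \in \Sy(\LL)$ which move $\sigma(1)$ or $\sigma(t)$ away from $\{1,t\}$, or which place $1$ and $t$ into the ``middle'' discrete-series factor via the pairing between $J_\sigma^+$ and $J_\sigma^-$. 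The alternating sign structure is designed precisely so that these correction terms cancel in pairs, up to the socle $\pi(\LL)$ itself. One verifies the cancellation by comparing Jacquet modules: Theorem \ref{jacquet} computes $\Jac_P(\pi(\LL))$ via $\EE(\LL)$, while Mackey theory applied to $I_\sigma(\LL)$ expresses $\Jac_P(I_\sigma(\LL))$ as a shuffle over segment endpoints and Jordan blocks of $\phi_\sigma$. Matching the two sides then reduces to a combinatorial identity over $\Sy(\LL)$.

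The main obstacle is the projection $p_\s$. In the GL case every induced term automatically has the correct cuspidal support, but here the direct sum defining $I_\sigma(\LL)$ ranges over $\delta \colon J_\sigma^- \to \{\pm 1\}$, and the summands $\pi(\phi_\sigma, \ep_{\sigma,\delta})$ can easily have $A$-parameters whose cuspidal support differs from that of $\pi(\LL)$; those summands are killed by $p_\s$. Tracking which pairs $(\sigma,\delta)$ survive, and verifying that the surviving alternating sum collapses to $[\pi(\LL)]$, requires a delicate parity analysis combining the sign constraint $(-1)^{[\half{t}]+l}\eta^t = 1$ with the combinatorics of $\Sy(\LL)$. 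I expect this parity bookkeeping, together with a careful induction that interacts correctly with the graph $\EE(\LL)$ and its derivative description (Proposition \ref{der}), to be the heart of the argument.
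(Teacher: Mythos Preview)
Your approach differs substantially from the paper's, and the inductive step as you describe it has a genuine gap.

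The paper does not induct on $l$; it inducts on the rank $n$ (equivalently on $m_\rho = \tfrac{1}{2}|f_\rho^{-1}(0)|$) by comparing $\rho|\cdot|^\lambda$-derivatives of both sides. Two short lemmas do all the work. If $D_{\rho|\cdot|^\lambda}(\pi(\LL))=0$, one writes down an explicit sign-reversing involution $\sigma\mapsto\sigma'$ on $\{\sigma\in\Sy(\LL):D_{\rho|\cdot|^\lambda}(I_\sigma(\LL))\neq 0\}$ with $D_{\rho|\cdot|^\lambda}([I_\sigma(\LL)])=D_{\rho|\cdot|^\lambda}([I_{\sigma'}(\LL)])$, forcing the derivative of the right-hand side to vanish too. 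If $D_{\rho|\cdot|^\lambda}(\pi(\LL))=\pi(\LL')\neq 0$, then $\LL'$ is again ladder with $\Sy(\LL')=\Sy(\LL)$ and $D_{\rho|\cdot|^\lambda}([I_\sigma(\LL)])=[I_\sigma(\LL')]$, so the inductive hypothesis for $G_{n-d}$ applies directly. Iterating these derivatives down to the cuspidal level, and using $D_{\rho|\cdot|^\lambda}\circ p_\s=p_{\s'}\circ D_{\rho|\cdot|^\lambda}$, shows that both sides have the same Jacquet module at the cuspidal parabolic; since both lie in the span of standard modules with support $\s$, they agree. In this scheme $p_\s$ is not an obstacle but the device that makes the last step work.

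Your induction on $l$ stalls because the embedding $\pi(\LL)\hookrightarrow\Delta[x_1,-x_t]\rtimes\pi(\LL')$ only gives an inequality $[\pi(\LL)]\leq[\Delta[x_1,-x_t]\rtimes\pi(\LL')]$ in $\Rr(G_n)$. To upgrade this to an identity you must know the cokernel, and that cokernel is not a sum of ladder representations to which the hypothesis applies. Your claim that the ``correction terms cancel in pairs, up to the socle $\pi(\LL)$'' is then exactly the content of the theorem: the correction terms are the $I_\sigma(\LL)$ with $(\sigma(1),\sigma(t))\neq(1,t)$, and asserting that their signed sum equals $[\Delta[x_1,-x_t]\rtimes\pi(\LL')]-[\pi(\LL)]$ is what needs proof. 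If you close this by comparing Jacquet modules, you are effectively running the paper's derivative argument, and the outer induction on $l$ becomes unnecessary scaffolding; if you do not, the step is circular.
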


Remark that the projection operator $p_\s$ can be computed by using \cite[Theorem 4.2]{At}. 
In particular, all terms in the right hand side of our formula can be write down explicitly. 
\vskip 10pt

In the following example, 
when $\phi = \oplus_{i=1}^r S_{2x_i+1}$ and $\epsilon_i = \ep(S_{2x_i+1})$ for $1 \leq i \leq r$, 
we write 
\[
\pi(\phi,\ep) = \pi(x_1^{\epsilon_1}, \dots, x_r^{\epsilon_r}).
\] 

\begin{ex}\label{ex0}
Let us consider $\LL \coloneqq (\{|\cdot|^0, |\cdot|^1, |\cdot|^2\}, 1, +1)$. 
Then $\pi(\LL)$ is the unique irreducible subrepresentation of 
$\Delta[0,-2] \rtimes \pi(1^+)$.
We give the list of $\sigma \in \Sy(\LL) = \Sy_3$, $\sgn(\sigma)$ and $I_\sigma(\LL)$
in Table \ref{tab0}. 

{\renewcommand\arraystretch{1.3}
\begin{longtable}[c]{ccc}
\caption{List of $\sigma$, $\sgn(\sigma)$, $I_\sigma(\LL)$ in Example \ref{ex0}.}
\label{tab0}
\\
\hline
$\sigma$ & $\sgn(\sigma)$ & $I_\sigma(\LL)$ \\
\hline
$\left(\begin{smallmatrix}
1&2&3\\
1&2&3
\end{smallmatrix}\right)$
& $1$ &
$\Delta[0,-2] \rtimes \pi(1^+)$\\
$\left(\begin{smallmatrix}
1&2&3\\
1&3&2
\end{smallmatrix}\right)$
& $-1$ &
$\Delta[0,-1] \rtimes \pi(2^+)$\\
$\left(\begin{smallmatrix}
1&2&3\\
2&1&3
\end{smallmatrix}\right)$
& $-1$ &
$\Delta[1,-2] \rtimes \pi(0^+)$\\
$\left(\begin{smallmatrix}
1&2&3\\
2&3&1
\end{smallmatrix}\right)$
& $1$ &
$\pi(0^+,1^+,2^+) \oplus \pi(0^-,1^-,2^+)$\\
$\left(\begin{smallmatrix}
1&2&3\\
3&1&2
\end{smallmatrix}\right)$
& $1$ &
$\pi(0^+,1^+,2^+) \oplus \pi(0^+,1^-,2^-)$\\
$\left(\begin{smallmatrix}
1&2&3\\
3&2&1
\end{smallmatrix}\right)$
& $-1$ &
$\pi(0^+,1^+,2^+) \oplus \pi(0^-,1^+,2^-)$
\end{longtable}
}

Since $p_\s([\pi(0^-,1^+,2^-)]) = 0$, Theorem \ref{main} asserts that 
\begin{align*}
[\pi(\LL)] &=
[\Delta[0,-2] \rtimes \pi(1^+)] - [\Delta[0,-1] \rtimes \pi(2^+)] - [\Delta[1,-2] \rtimes \pi(0^+)]
\\&\quad
+[\pi(0^+,1^+,2^+)] + [\pi(0^-,1^-,2^+)] + [\pi(0^+,1^-,2^-)].
\end{align*}
\end{ex}
\vskip 10pt

This paper is organized as follows. 
First of all, we recall some general results on representation theory of $p$-adic classical groups in \S \ref{pre}.
In \S \ref{sec:ladder}, we define ladder representations and establish several properties. 
Finally, in \S \ref{sec:det}, we state and prove our determinantal formula (Theorem \ref{determinantal}).
\vskip 10pt

\subsection*{Acknowledgement}
We would like to thank Alberto M{\'i}nguez
for proposing this project and for useful discussions.
Especially, he gave an idea to use the projection operator $p_\s \colon \Rr(G_n) \rightarrow \Rr(G_n)$
for our determinantal formula.
To answer his questions, we found significant examples 
(Examples \ref{counter} and \ref{counter2}). 
The author was supported by JSPS KAKENHI Grant Number 19K14494.

\section{Preliminaries}\label{pre}
In this section, we introduce our notation. 

\subsection{Notation}
Throughout this paper, 
we fix a non-archimedean local field $F$ of characteristic zero with normalized absolute value $|\cdot|$. 
\par

For the group $G$ of $F$-points of a connected reductive group defined over $F$, 
representations of $G$ mean that smooth representations over $\C$, 
that is, the stabilizer of every vector is an open subgroup of $G$. 
We denote by $\Rep(G)$ the category of smooth representations of $G$ of finite length, 
and by $\Irr(G)$ the set of equivalence classes of irreducible objects of $\Rep(G)$. 
Let $\Rr(G)$ be the Grothendieck group of $\Rep(G)$. 
The canonical map from the objects of $\Rep(G)$ to $\Rr (G)$ will be denoted by $\pi\mapsto[\pi]$.
\par

Fix a minimal $F$-parabolic subgroup $P_0$ of $G$.
In this paper, every parabolic subgroup of $G$ is assumed to be \emph{standard}, 
i.e., it contains $P_0$.
Let $P = MN$ be a (standard) parabolic subgroup of $G$. 
For a representation $\tau$ of $M$, 
we denote by $\Ind_P^G(\tau)$ the normalized parabolically induced representation of $\tau$. 
We view $\Ind_P^G$ as a functor. 
Its left adjoint, the Jacquet functor with respect to $P$, will be denoted by $\Jac_P$.
\par

We say that an irreducible representation $\pi$ of $G$ is \emph{supercuspidal} 
if it is not a composition factor of any representation of the form $\Ind^{G}_P(\tau)$, 
where $P = MN$ is a proper parabolic subgroup of $G$ and $\tau$ is a representation of $M$. 
We  write $\Cusp(G)$ for the subset of  $\Irr(G)$ consisting of supercuspidal representations.
For any $\pi \in \Rep(G)$, we denote by $\pi^\vee$ the contragredient of $\pi$.
\par

\subsection{Representations of general linear groups}\label{rep0}
Set 
\[
\Irr^\GL \coloneqq \bigcup_{n \geq 0}\Irr (\GL_n(F)),
\quad
\Cusp^\GL \coloneqq \bigcup_{n \geq 0} \Cusp(\GL_n(F)).
\]
For $\rho \in \Cusp^\GL$, let $d_\rho \geq 0$ be such that $\rho \in \Cusp(\GL_{d_\rho}(F))$.
We also write 
\[
\Rr^\GL \coloneqq \bigoplus_{n \geq 0} \Rr(\GL_n(F)).
\]
\par

For $\tau_i \in \Rep(\GL_{n_i}(F))$ for $i=1,\dots,r$, 
we denote the normalized parabolically induced representation by 
\[
\tau_1 \times \dots \times \tau_r \coloneqq 
\Ind_{P_{(n_1,\dots,n_r)}}^{\GL_{n_1+\dots+n_r}(F)}(\tau_1 \boxtimes \dots \boxtimes \tau_r), 
\]
where $P_{(n_1,\dots,n_r)}$ is the parabolic subgroup of $\GL_{n_1+\dots+n_r}(F)$
with Levi subgroup $\GL_{n_1}(F) \times \dots \times \GL_{n_r}(F)$. 
We define a map $m \colon \Rr^\GL \times \Rr^\GL \rightarrow \Rr^\GL$ 
by the $\Z$-linear extension of 
\[
[\tau_1] \times [\tau_2] \mapsto [\tau_1 \times \tau_2]
\]
for $\tau_i \in \Irr(\GL_{n_i}(F))$.
It gives a $\Z$-graded ring structure on $\Rr^\GL$. 
\par

The Jacquet functor for $\GL_{n_1+\dots+n_r}(F)$ with respect to $P_{(n_1,\dots,n_r)}$ 
is denoted by $\Jac_{(n_1,\dots,n_r)} = \Jac_{P_{(n_1,\dots,n_r)}}$. 
We define a map $m^* \colon \Rr^\GL \rightarrow \Rr^\GL \otimes \Rr^\GL$
by the $\Z$-linear extension of 
\[
[\tau] \mapsto \sum_{k=0}^n\left[\Jac_{(k,n-k)}(\tau)\right]
\]
for $\tau \in \Irr(\GL_n(F))$.
It gives a co-multiplication of $\Rr^\GL$, i.e., $m^*$ is a ring homomorphism.
\par

Finally, we set
\[
M^* \colon \Rr^\GL \rightarrow \Rr^\GL\otimes \Rr^\GL 
\]
to be the composition $M^* \coloneqq (m\otimes \id) \circ (\cdot^\vee\otimes m^*) \circ s \circ m^*$, 
where $s \colon \Rr^\GL\otimes \Rr^\GL \to \Rr^\GL\otimes \Rr^\GL $ denotes 
the transposition $s(\sum_i \tau_i\otimes \tau'_i) \coloneqq \sum_i \tau'_i\otimes \tau_i$.
\par

\subsection{Representations of classical groups}\label{rep}
In this paper, we let $G_n$ be either the split special orthogonal group $\SO_{2n+1}(F)$ 
or the symplectic group $\Sp_{2n}(F)$ of rank $n$. 
Set 
\[
\Irr^{G} \coloneqq \bigcup_{n \geq 0} \Irr (G_n), 
\quad
\Cusp^G \coloneqq \bigcup_{n \geq 0}\Cusp(G_n)
\]
and 
\[
\Rr^G \coloneqq \bigoplus_{n \geq 0} \Rr(G_{n}), 
\]
where the unions and the direct sum are taken over groups of the same type. 
\par

Let $P$ be the standard parabolic subgroup of $G_n$ with Levi subgroup isomorphic to
$\GL_{d_1}(F) \times \dots \times \GL_{d_r}(F) \times G_{n_0}$. 
For $\pi_0 \in \Rep(G_{n_0})$ and $\tau_i \in \Rep(\GL_{d_i}(F))$ with $1 \leq i \leq r$, 
we denote the normalized parabolically induced representation by
\[
\tau_1 \times \dots \times \tau_r \rtimes \pi_0 
\coloneqq 
\Ind_{P}^{G_n}(\tau_1 \boxtimes \dots \boxtimes \tau_r \boxtimes \pi_0).
\]
It gives an $\Rr^\GL$-module structure on $\Rr^G$ by 
\[
\rtimes \colon \Rr^\GL \otimes \Rr^G \rightarrow \Rr^G,\;
[\tau] \otimes [\pi] \mapsto [\tau \rtimes \pi]
\]
for $\tau \in \Irr(\GL_d(F))$ and $\pi \in \Irr(G_n)$. 
\par

On the other hand, Jacquet functors give a comodule structure 
\[
\mu^* \colon \Rr^G \rightarrow \Rr^\GL \otimes \Rr^G
\]
by the $\Z$-linear extension of 
\[
[\pi] \mapsto \sum_{k=0}^{n} \left[\Jac_{P_k}^{G_n}(\pi)\right]
\]
for $\pi \in \Irr(G_n)$, 
where $P_k$ is the standard parabolic subgroup of $G_n$ 
with Levi subgroup isomorphic to $\GL_{k}(F) \times G_{n-k}$. 
The Geometric Lemma at the level of Grothendieck groups is commonly known in this case 
as \emph{Tadi{\'c}'s formula}.
\begin{prop}[Tadi{\'c}'s formula \cite{T}]\label{Tadic}
For $[\tau] \in \Rr^{\GL}$ and $[\pi] \in \Rr^G$, we have
\[
\mu^*([\tau] \rtimes [\pi])=M^*([\tau]) \rtimes \mu^*([\pi]).
\]
\end{prop}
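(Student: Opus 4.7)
The plan is to derive Tadić's formula from the Bernstein--Zelevinsky Geometric Lemma, carefully adapted to the classical-group setting where the long Weyl element of $G_n$ introduces a contragredient on $\GL$-blocks that get reflected across the $G_{n_0}$-factor. By $\Z$-bilinearity of both sides I may restrict to $\tau \in \Irr(\GL_d(F))$ and $\pi \in \Irr(G_{n_0})$ with $n = d + n_0$; the identity then unpacks as
\[
\sum_{k=0}^{n} \bigl[\Jac_{P_k}(\tau \rtimes \pi)\bigr] \;=\; M^*([\tau]) \rtimes \mu^*([\pi])
\]
in $\Rr^\GL \otimes \Rr^G$, where $P_k$ is the parabolic of $G_n$ with Levi $\GL_k(F) \times G_{n-k}$ and $\tau \rtimes \pi = \Ind_P^{G_n}(\tau \boxtimes \pi)$ for $P$ the parabolic with Levi $\GL_d(F) \times G_{n_0}$.

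For each $k$, I would apply the Geometric Lemma to the composite functor $\Jac_{P_k} \circ \Ind_P^{G_n}$. This produces a filtration of $\Jac_{P_k}(\tau \rtimes \pi)$ whose successive quotients are indexed by minimal-length double coset representatives $w$ in $W_{L_k} \backslash W_{G_n} / W_L$ (with $L_k$, $L$ the Levis of $P_k$, $P$), the quotient attached to $w$ having the form $\Ind_{L_k \cap w P w^{-1}}^{L_k} \bigl( w \cdot \Jac_{L \cap w^{-1} P_k w}^{L}(\tau \boxtimes \pi) \bigr)$. Passage to the Grothendieck group then turns this filtration into an exact equality and the problem becomes the enumeration of the index set.

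The combinatorial heart of the proof is the explicit enumeration of these double cosets. In the classical-group setting, they correspond bijectively to data consisting of an ordered three-fold cut of $\tau$ into pieces $\alpha, \beta, \gamma$ (coming from two successive applications of $m^*$), together with an ordered two-fold cut of $\pi$ into $\pi_{[1]}, \pi_{[2]}$ (coming from $\mu^*$). Under the associated Weyl element, $\alpha$ is placed directly into the $\GL_k(F)$ factor of $L_k$, $\pi_{[1]}$ also migrates there out of the Jacquet of the $G_{n_0}$-block, $\gamma$ is transported through the entire $G_{n_0}$-factor into the ``mirror'' $\GL$-block and identified with a subblock of $\GL_k(F)$ via the MVW-style involution, so that it contributes $\gamma^\vee$; meanwhile the middle piece $\beta$ becomes the $\GL$-factor of a parabolic of $G_{n-k}$, and $\pi_{[2]}$ remains in $G_{n-k}$. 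Thus the $\GL_k$-component of the $w$-summand is $\gamma^\vee \times \alpha \times \pi_{[1]}$ and its $G_{n-k}$-component is $\beta \rtimes \pi_{[2]}$; summing over $k$ and over all such cuts reproduces precisely the definition $M^* = (m \otimes \id) \circ (\cdot^\vee \otimes m^*) \circ s \circ m^*$ applied to $[\tau]$ and then paired componentwise with $\mu^*([\pi])$ via $\rtimes$.

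The main obstacle is the Weyl-group bookkeeping for classical groups. Concretely, I must verify that the minimal-length double coset representatives are in bijection with the data above, that the intersections $L_k \cap w P w^{-1}$ and $L \cap w^{-1} P_k w$ correspond exactly to the Levis on which the cuts $(\alpha, \beta, \gamma)$ and $(\pi_{[1]}, \pi_{[2]})$ live, and---most importantly---that the unique element of $W_{G_n}$ which carries the trailing $\GL$-block of $L$ across the $G_{n_0}$-factor to the leading $\GL$-block of $L_k$ acts on that block by the MVW involution, so that a genuine contragredient (rather than some other inner twist) appears on $\gamma$. Both points can be checked directly in the root systems of types $B$ and $C$ using the standard realization of $W_{G_n}$ as signed permutations, after which the asserted identity follows by matching terms.
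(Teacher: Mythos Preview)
Your sketch is essentially correct and follows the standard derivation of Tadi\'c's formula from the Bernstein--Zelevinsky Geometric Lemma: enumerate the $W_{L_k}\backslash W_{G_n}/W_L$ double cosets via signed-permutation combinatorics, identify each coset with a triple cut $(\alpha,\beta,\gamma)$ of $\tau$ together with a cut of $\pi$, and observe that the Weyl element flipping a $\GL$-block across the classical factor introduces the contragredient. This is exactly the argument in Tadi\'c's original paper \cite{T}.

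The paper under review, however, does \emph{not} give a proof of this proposition at all: it simply states the formula and cites \cite{T}. So there is nothing substantive to compare against; your proposal supplies a proof where the paper supplies only a reference. If anything, your write-up is more detailed than what the author intended to include. One small caveat: be careful that the ordering of the factors $\gamma^\vee \times \alpha \times \pi_{[1]}$ in your $\GL_k$-component matches the paper's specific definition $M^* = (m\otimes\id)\circ(\cdot^\vee\otimes m^*)\circ s\circ m^*$ and the convention for how $\rtimes$ acts on $(\Rr^\GL\otimes\Rr^\GL)\otimes(\Rr^\GL\otimes\Rr^G)$; different sources order these products differently, and getting it wrong would give a formula that is still true in the Grothendieck group (by commutativity of $\times$ there) but would not literally reproduce the stated $M^*$.
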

\par

For $\pi \in \Irr(G_n)$, 
there exist $\rho_1, \dots, \rho_r \in \Cusp^\GL$ and $\sigma \in \Cusp^G$
such that $\pi$ is a subquotient of $\rho_1 \times \dots \times \rho_r \rtimes \sigma$. 
The multi-set 
\[
\supp(\pi) \coloneqq \{\rho_1, \dots, \rho_r, \rho_1^\vee, \dots, \rho_r^\vee\} \cup \{\sigma\} 
\]
is uniquely determined by $\pi$.
We call $\supp(\pi)$ the \emph{cuspidal support} of $\pi$. 
Similarly, when $\rho_1 \times \dots \times \rho_r \rtimes \sigma$ is a representation of $G_n$, 
we say that the multi-set $\{\rho_1, \dots, \rho_r, \rho_1^\vee, \dots, \rho_r^\vee\} \cup \{\sigma\}$
is a \emph{cuspidal support} of $G_n$. 
For a cuspidal support $\s$ of $G_n$, 
we define a map $p_\s \colon \Rr(G_n) \rightarrow \Rr(G_n)$ 
by the $\Z$-linear extension of 
\[
[\pi] \mapsto \left\{
\begin{aligned}
&[\pi] \iif \supp(\pi) = \s, \\
&0 \other
\end{aligned}
\right.
\]
for $\pi \in \Irr(G_n)$. 
By the Langlands classification explained in \S \ref{classification} below, 
the computation of $p_\s([\pi])$ for $\pi \in \Irr(G_n)$
is reduced to the case where $\pi$ is tempered.
Since the cuspidal supports of irreducible tempered representations 
can be computed by \cite[Theorem 4.2]{At}, 
one can easily determine $p_\s([\pi])$ for any $\pi \in \Irr(G_n)$. 
\par

Fix $\rho \in \Cusp(\GL_d(F))$. 
For $\pi \in \Rep(G_n)$ with $n \geq d$, 
we define the \emph{$\rho$-derivative} $D_\rho(\pi)$ of $\pi$ by 
the semisimple representation of $G_{n-d}$ satisfying that
\[
[\Jac_{P_d}(\pi)] = [\rho] \boxtimes [D_\rho(\pi)] + \sum_i [\tau_i] \otimes [\pi_i], 
\]
where $\tau_i \in \Irr(\GL_d(F))$ and $\pi_i \in \Irr(G_{n-d})$ such that $\tau_i \not\cong \rho$. 
When $n < d$, we understand that $D_\rho(\pi) = 0$ for any $\pi \in \Rep(G_n)$.
By the $\Z$-linear extension, we regard $D_\rho$ as a map $\Rr^G \rightarrow \Rr^G$.
We note that
for a cuspidal support $\s$ of $G_n$, 
we have
\[
D_\rho \circ p_\s = 
\left\{
\begin{aligned}
&p_{\s'} \circ D_\rho \iif \rho \in \s, \\
&0 \other, 
\end{aligned}
\right.
\]
where $\s'$ is the cuspidal support of $G_{n-d}$ such that $\s = \s' \cup \{\rho, \rho^\vee\}$.
\par

We say that $\rho \in \Cusp^\GL$ is \emph{good} 
if 
\begin{itemize}
\item
$\rho^\vee \cong \rho|\cdot|^a$ for some $a \in \Z$; and 
\item
$\rho|\cdot|^m \rtimes \1_{G_0}$ is reducible for some $m \in \Z$. 
\end{itemize}
Note that
for any $\rho \in \Cusp^\GL$ satisfying that $\rho \cong \rho^\vee$, 
exactly one of $\rho$ or $\rho|\cdot|^{1/2}$ is good. 
An irreducible representation $\pi$ of $G_n$ is said to be \emph{of good parity}
if every $\rho \in \supp(\pi) \cap \Cusp^\GL$ is good.

\subsection{Langlands classification}\label{classification}
For  $\tau \in \Rep(\GL_n(F))$ and a character $\chi$ of $F^\times$, 
we denote by $\tau\chi$ the representation obtained from $\tau$ 
by twisting by the character $\chi \circ \det$. 
A \emph{segment} $[x,y]_\rho$ is a set of supercuspidal representations of the form
\[
\{\rho|\cdot|^{x} ,\rho|\cdot|^{x-1} ,\dots ,\rho|\cdot|^{y}\},
\]
where $\rho \in \Cusp^\GL$ and $x,y \in \R$ with $x-y \in \Z$ and $x \geq y$.
For a segment $[x,y]_\rho$, 
we define the \emph{Steinberg representation} $\Delta_{\rho}[x,y]$ of $\GL_{d_\rho(x-y+1)}(F)$ by
the unique irreducible subrepresentation of 
\[
\rho|\cdot|^{x} \times \rho|\cdot|^{x-1} \times \dots \times \rho|\cdot|^{y}. 
\]
It is an essentially discrete series 
and all essentially discrete series are of this form \cite[Theorem 9.3]{Z}. 
By convention, we set $\Delta_\rho[x,x+1]$ to be the trivial representation of the trivial group $\GL_0(F)$,
and $\Delta_\rho[x,y] \coloneqq 0$ for $y>x+1$.
\par

Let $[x_1,y_1]_{\rho_1}, \dots, [x_r,y_r]_{\rho_r}$ be segments, 
where $\rho_i \in \Cusp^\GL$ is assumed to be unitary for $1 \leq i \leq r$, 
and let $\pi_\temp$ be an irreducible tempered representation of $G_{n_0}$. 
When $x_1+y_1 \leq \dots \leq x_r+y_r < 0$, 
the parabolically induced representation 
\[
\Delta_{\rho_1}[x_1,y_1] \times \dots \times \Delta_{\rho_r}[x_r,y_r] \rtimes \pi_\temp
\]
is called a \emph{standard module}. 
It has a unique irreducible subrepresentation, denoted by 
\[
L(\Delta_{\rho_1}[x_1,y_1], \dots, \Delta_{\rho_r}[x_r,y_r]; \pi_\temp).
\]
Conversely, the Langlands classification says that 
any irreducible representation $\pi$ of $G_n$
is of this form for some standard module.
It is known that 
the isomorphism classes of standard modules of $G_n$ give a basis of $\Rr(G_n)$. 
\par

By the local Langlands correspondence established by Arthur \cite{Ar}, 
every irreducible tempered representation $\pi_\temp$ of $G_n$
is written as $\pi_\temp = \pi(\phi,\ep)$, 
where $\phi$ is a \emph{tempered $L$-parameter} for $G_n$ 
and $\ep$ is a character of the component group of $\phi$. 
When $\pi_\temp = \pi(\phi,\ep)$ is of good parity, they are described as follows: 
\begin{itemize}
\item
$\phi$ is decomposed into a (formal) sum 
\[
\phi = \bigoplus_{i=1}^r \rho_i \boxtimes S_{a_i}, 
\]
where $\rho_i \in \Cusp^\GL$ is assumed to be unitary, and 
$S_{a_i}$ is a unique irreducible algebraic representation of $\SL_2(\C)$ of dimension $a_i$
such that 
\begin{enumerate}
\item
$\rho_i|\cdot|^{\half{a_i-1}}$ is good for $i=1,\dots,r$; 
\item
$\sum_{i=1}^r d_{\rho_i}a_i = 2n$ if $G_n = \SO_{2n+1}(F)$
(\resp $\sum_{i=1}^r d_{\rho_i}a_i = 2n+1$ if $G_n = \Sp_{2n}(F)$); 
\item
$\prod_{i=1}^r \omega_{i}^{a_i} = \1$, where $\omega_i$ is the central character of $\rho_i$;
\end{enumerate}

\item
$\ep$ is characterized by a tuple of sign 
$(\ep(\rho_1 \boxtimes S_{a_1}), \dots, \ep(\rho_r \boxtimes S_{a_r})) \in \{\pm1\}^r$
such that 
\begin{enumerate}
\setcounter{enumi}{3}
\item
if $\rho_i \cong \rho_j$ and $a_i = a_j$, 
then $\ep(\rho_i \boxtimes S_{a_i}) = \ep(\rho_j \boxtimes S_{a_j})$; 
\item
$\prod_{i=1}^r \ep(\rho_i \boxtimes S_{a_i}) = 1$.
\end{enumerate}
\end{itemize}
By convention, let $S_0$ be the zero representation of $\SL_2(\C)$.
When $\rho|\cdot|^{\half{1}}$ is good, 
we formally allow the situation $\phi \supset \rho \boxtimes S_0$. 
In this case, we set
\[
\pi(\phi,\ep) \coloneqq 
\left\{
\begin{aligned}
&\pi(\phi-\rho \boxtimes S_0,\ep) \iif \ep(\rho \boxtimes S_0) = 1, \\
&0 \iif \ep(\rho \boxtimes S_0) = -1.
\end{aligned}
\right.
\]

\section{Ladder representations}\label{sec:ladder}
In this section, we define \emph{ladder representations} for $G_n$, 
and establish several properties. 

\subsection{Definition}
Recall that ladder representations of $\GL_n(F)$
are obtained by appropriate Jacquet modules of \emph{Speh representations}.
Inspired by M{\oe}glin's construction of irreducible representations with $A$-paramters, 
we introduce the following notion.

\begin{defi}\label{def:ladder}
We say that $\LL = (\ub{X}, \ub{l}, \ub{\eta})$ is a \emph{ladder datum} for $G_n$
if the following conditions hold.

\begin{enumerate}
\item
$\ub{X} = (X_\rho)_\rho$, $\ub{l} = (l_\rho)_\rho$ and $\ub{\eta} = (\eta_\rho)_\rho$, 
where $\rho$ runs over $\Cusp^\GL$ such that $\rho^\vee \cong \rho$.

\item
$X_\rho$ is a finite subset of $\Cusp^\GL$ of the form 
\[
X_\rho = \{ \rho|\cdot|^{x_1}, \dots, \rho|\cdot|^{x_{t_\rho}}\}
\]
such that 
\begin{itemize}
\item
all but finitely many $\rho$, the cardinality $t_\rho$ is zero; 
\item
$\rho|\cdot|^{x_j}$ is good (so that $2x_i \in \Z$) for $1 \leq j \leq t_\rho$; 
\item
$x_1 < \dots < x_{t_\rho}$, which depend on $\rho$;
\item
the sum
\[
\sum_{\rho} (2x_1+\dots+2x_{t_\rho}+t_\rho)d_\rho
\]
is equal to $2n$ if $G_n = \SO_{2n+1}(F)$ (\resp $2n+1$ if $G_n = \Sp_{2n}(F)$).
\end{itemize}

\item
$l_\rho \in \Z$ such that 
\begin{itemize}
\item
$0 \leq 2l_\rho \leq t_\rho$; 
\item
$x_j+x_{t_\rho-j+1} \geq 0$ for $1 \leq j \leq l_\rho$ (especially $x_{t_\rho-l_\rho+1} > 0$);
\item
$x_i \geq -1/2$ for $l_\rho+1 \leq i \leq t_\rho-l_\rho$.
\end{itemize}

\item
$\eta_\rho \in \{\pm1\}$ such that 
\begin{itemize}
\item
$\eta_\rho = 1$ if $X_\rho = \emptyset$ or if $x_{l_\rho+1} = -1/2$;
\item
$\eta_\rho = -1$ if $2l_\rho = t_\rho$;
\item
the equation
\[
\prod_{\rho}(-1)^{[\half{t_\rho}]+l_\rho} \eta_\rho^{t_\rho} = 1
\]
holds.
\end{itemize}
\end{enumerate}
Define $\pi(\LL) \in \Irr(G_n)$ by 
the unique irreducible subrepresentation of the standard module
\[
I(\LL) \coloneqq
\bigtimes_{\rho} 
\left(\Delta_\rho[x_1, -x_{t_\rho}] \times \Delta_\rho[x_2,-x_{t_\rho-1}] \times \dots \times 
\Delta_\rho[x_{l_\rho}, -x_{t_\rho-l_\rho+1}]
\right)
\rtimes \pi(\phi, \ep), 
\]
where 
\[
\phi \coloneqq \bigoplus_{\rho} \bigoplus_{i=l_\rho+1}^{t_\rho-l_\rho} \rho \boxtimes S_{2x_i+1}
\]
and 
$\ep(\rho \boxtimes S_{2x_i+1}) \coloneqq (-1)^{i-l_\rho-1}\eta_\rho$ 
for $l_\rho+1 \leq i \leq t_\rho-l_\rho$.
We call $\pi(\LL)$ a \emph{ladder representation} of $G_n$.
\end{defi}

Note that $\pi(\LL)$ is an irreducible representation of good parity of $G_n$.
Moreover, the map $\LL \mapsto \pi(\LL)$ is injective.

\begin{rem}
Let $\LL = (\ub{X}, \ub{l}, \ub{\eta})$ be a ladder datum 
with $\ub{X} = (X_\rho)_\rho$ and $\ub{l} = (l_\rho)_\rho$.
\begin{enumerate}
\item
If $l_\rho = 0$ for all $\rho$, 
then $\pi(\LL)$ is \emph{strongly positive discrete series}. 
For this notion, see \cite{Ma1} for example. 
Conversely, every strongly positive discrete series representation 
can be written as $\pi(\LL)$ for some ladder datum $\LL = (\ub{X}, \ub{l}, \ub{\eta})$ with $l_\rho = 0$ for all $\rho$. 

\item
If $X_\rho$ is a segment $[x,y]_\rho$ for all $\rho$, 
then $\pi(\LL)$ is an irreducible representation with an $A$-parameter. 
In this case, the definition of $\pi(\LL)$ is exactly the same as M{\oe}glin's construction. 
For more precision, see \cite{X2}.
Conversely, if $\pi$ is an irreducible representation with an \emph{irreducible} $A$-parameter, 
then $\pi$ can be written as $\pi(\LL)$ for some ladder datum $\LL = (\ub{X}, \ub{l}, \ub{\eta})$ 
such that $X_\rho$ is a segment for some $\rho$ and $X_{\rho'} = \emptyset$ for $\rho' \not\cong \rho$. 

\end{enumerate}
\end{rem}

\subsection{The graph}
Fix $\rho \in \Cusp^\GL$ such that $\rho^\vee \cong \rho$. 
For a ladder datum $\LL = (\ub{X},\ub{l},\ub{\eta})$, 
we write $\LL_\rho = (X_\rho,l_\rho,\eta_\rho)$. 
As an analogue of \cite[\S 3]{LM}, we attach a directed $3$-colorable graph to $\LL_\rho$. 

\begin{defi}\label{graph}
Write $X_\rho = \{\rho|\cdot|^{x_1},\dots,\rho|\cdot|^{x_t}\}$ with $x_1 < \dots < x_t$ 
and $(l,\eta) = (l_\rho, \eta_\rho)$.
We define a directed $3$-colorable graph 
$\EE(\LL_\rho) = (\VV(\LL_\rho), \less, f_\rho)$ as follows.

\begin{enumerate}
\item
The vertex set of $\EE(\LL_\rho)$ is
\[
\VV(\LL_\rho) \coloneqq 
\{
(x_{l+1+i},-i), (x_{l+1+i}-1,-i), \dots, (-x_{t-l-i},-i) \;|\; -l \leq i \leq t-l-1
\}.
\]

\item
The edges of $\EE(\LL_\rho)$ consist of
the horizontal arrows and the diagonal arrows 
assigned as follows:
\begin{itemize}
\item
If $x_1,\dots,x_t \in \Z$, 
then 
the horizontal arrows are 
\[
(j,-i) \less (j-1,-i)
\]
for $-l \leq i \leq t-l-1$ and $-x_{t-l-i}+1 \leq j \leq x_{l+1+i}$, 
and the diagonal arrows are
\[
(j,-i) \less (j+1, -(i+1))
\]
for $-l \leq i < t-l-1$ and $-x_{t-l-(i+1)}-1 \leq j \leq x_{l+1+i}$. 

\item
If $x_1,\dots,x_t \in (1/2)\Z\setminus\Z$, 
then
the horizontal arrows are 
\[
(j+1/2,-i) \less (j-1/2,-i)
\]
for $-l \leq i \leq t-l-1$ and $-x_{t-l-i}+1 \leq j+1/2 \leq x_{l+1+i}$, 
and the diagonal arrows are
\[
(j-1/2,-i) \less (j+1/2, -(i+1))
\]
for $-l \leq i < t-l-1$ and $-x_{t-l-(i+1)}-1 \leq j-1/2 \leq x_{l+1+i}$. 
\end{itemize}

\item
The coloring map $f_\rho \colon \VV(\LL_\rho) \rightarrow \{-1,0,1\}$ is given as follows:
\begin{itemize}
\item
If $x_1,\dots,x_t \in \Z$, then 
\[
f_\rho(j,-i) \coloneqq
\left\{
\begin{aligned}
&(-1)^{j}\eta \iif 0 \leq i \leq t-2l-1,\; i-(t-2l-1) \leq j \leq i, \\
&0 \other.
\end{aligned}
\right.
\]
Here, we note that $x_{l+1+i} \geq i$ for $0 \leq i \leq t-2l-1$.

\item
If $x_1,\dots,x_t \in (1/2)\Z\setminus\Z$, 
then
\[
f_\rho\left(j-\half{1}\eta,-i\right) \coloneqq
\left\{
\begin{aligned}
&(-1)^{j}\eta \iif 0 \leq i \leq t-2l-1,\; i-(t-2l-1)+\eta \leq j \leq i, \\
&0 \other.
\end{aligned}
\right.
\]
Here, we note that $x_{l+1+i} \geq i-\half{1}\eta$ for $0 \leq i \leq t-2l-1$.
\end{itemize}

\end{enumerate}
\end{defi}

\begin{rem}
\begin{enumerate}
\item
The map $\LL_\rho \mapsto \EE(\LL_\rho) = (\VV(\LL_\rho), \less, f_\rho)$ is injective. 
In particular, one can specify $\EE(\LL_\rho)$ to assign $\LL_\rho$.

\item
Note that $f_\rho(x,y) = 0 \iff f_\rho(-x,-(t-2l-1)-y) = 0$. 
Moreover, if $(x,y) = (-x,-(t-2l-1)-y)$, 
then $t$ is odd and $(x,y) = (0,-(\half{t-2l-1}))$ so that $f_\rho(x,y) \not= 0$. 
Hence $|f_\rho^{-1}(0)|$ is even.
We will write $2m_\rho = |f_\rho^{-1}(0)|$.

\item
For each $x \in \R$, 
the graph $\EE(\LL_\rho)$ has at most one minimal vertex of the form $(x,y)$ for $y \in \R$.
\end{enumerate}
\end{rem}

\begin{ex}
\begin{enumerate}
\item
Consider $\LL_\rho = (\{\rho|\cdot|^{1/2},\rho|\cdot|^{5/2},\rho|\cdot|^{7/2}\}, 0, -1)$.
Then the graph $\EE(\LL_\rho)$ is as follows:
\begin{center}
\begin{tikzpicture}
\draw[help lines, step=1,dashed] (0,0) grid (7,2);
\node at (0,2.2) [above] {$-\half{7}$};
\node at (1,2.2) [above] {$-\half{5}$};
\node at (2,2.2) [above] {$-\half{3}$};
\node at (3,2.2) [above] {$-\half{1}$};
\node at (4,2.2) [above] {$\half{1}$};
\node at (5,2.2) [above] {$\half{3}$};
\node at (6,2.2) [above] {$\half{5}$};
\node at (7,2.2) [above] {$\half{7}$};

\filldraw [fill = white] 
(0,2) circle (2pt);
\fill 
(4,2) circle (2pt) 
(3,2) circle (2pt) 
(2,2) circle (2pt) 
(1,2) circle (2pt) ; 
\node at (4,2) [above right, inner sep=1] {\scriptsize $-$};
\node at (3,2) [above right, inner sep=1] {\scriptsize $+$};
\node at (2,2) [above right, inner sep=1] {\scriptsize $-$};
\node at (1,2) [above right, inner sep=1] {\scriptsize $+$};
\draw[->](3+0.07,2)--(4-0.07,2);
\draw[->](2+0.07,2)--(3-0.07,2);
\draw[->](1+0.07,2)--(2-0.07,2);
\draw[->](0+0.07,2)--(1-0.07,2);

\filldraw [fill = white] 
(6,1) circle (2pt) 
(1,1) circle (2pt);
\fill 
(5,1) circle (2pt) 
(4,1) circle (2pt) 
(3,1) circle (2pt) 
(2,1) circle (2pt) ; 
\node at (5,1) [above right, inner sep=1] {\scriptsize $+$};
\node at (4,1) [above right, inner sep=1] {\scriptsize $-$};
\node at (3,1) [above right, inner sep=1] {\scriptsize $+$};
\node at (2,1) [above right, inner sep=1] {\scriptsize $-$};
\draw[->](5+0.07,1)--(6-0.07,1);
\draw[->](4+0.07,1)--(5-0.07,1);
\draw[->](3+0.07,1)--(4-0.07,1);
\draw[->](2+0.07,1)--(3-0.07,1);
\draw[->](1+0.07,1)--(2-0.07,1);

\filldraw [fill = white] 
(7,0) circle (2pt) ;
\fill 
(6,0) circle (2pt) 
(5,0) circle (2pt) 
(4,0) circle (2pt) 
(3,0) circle (2pt) ;
\node at (6,0) [above right, inner sep=1] {\scriptsize $-$};
\node at (5,0) [above right, inner sep=1] {\scriptsize $+$};
\node at (4,0) [above right, inner sep=1] {\scriptsize $-$};
\node at (3,0) [above right, inner sep=1] {\scriptsize $+$};
\draw[->](6+0.07,0)--(7-0.07,0);
\draw[->](5+0.07,0)--(6-0.07,0);
\draw[->](4+0.07,0)--(5-0.07,0);
\draw[->](3+0.07,0)--(4-0.07,0);

\draw[->](5-0.05,1+0.05)--(4+0.05,2-0.05);
\draw[->](4-0.05,1+0.05)--(3+0.05,2-0.05);
\draw[->](3-0.05,1+0.05)--(2+0.05,2-0.05);
\draw[->](2-0.05,1+0.05)--(1+0.05,2-0.05);
\draw[->](1-0.05,1+0.05)--(0+0.05,2-0.05);

\draw[->](7-0.05,0+0.05)--(6+0.05,1-0.05);
\draw[->](6-0.05,0+0.05)--(5+0.05,1-0.05);
\draw[->](5-0.05,0+0.05)--(4+0.05,1-0.05);
\draw[->](4-0.05,0+0.05)--(3+0.05,1-0.05);
\draw[->](3-0.05,0+0.05)--(2+0.05,1-0.05);

\end{tikzpicture}
\end{center}
Here, for a vertex $(x,y) \in \VV(\LL_\rho)$, 
we put a white circle if $f_\rho(x,y) = 0$, 
whereas a block circle if $f_\rho(x,y) \not= 0$.

\item
Consider $\LL_\rho = ([4,0]_\rho, 1, -1)$.
Then the graph $\EE(\LL_\rho)$ is as follows:
\begin{center}
\begin{tikzpicture}
\draw[help lines, step=1,dashed] (0,0) grid (8,4);
\node at (0,4) [above] {$-4$};
\node at (1,4) [above] {$-3$};
\node at (2,4) [above] {$-2$};
\node at (3,4) [above] {$-1$};
\node at (4,4) [above] {$0$};
\node at (5,4) [above] {$1$};
\node at (6,4) [above] {$2$};
\node at (7,4) [above] {$3$};
\node at (8,4) [above] {$4$};

\filldraw [fill = white] 
(4,4) circle (2pt) 
(3,4) circle (2pt) 
(2,4) circle (2pt) 
(1,4) circle (2pt) 
(0,4) circle (2pt) ;
\draw[->](3+0.07,4)--(4-0.07,4);
\draw[->](2+0.07,4)--(3-0.07,4);
\draw[->](1+0.07,4)--(2-0.07,4);
\draw[->](0+0.07,4)--(1-0.07,4);

\filldraw [fill = white] 
(5,3) circle (2pt) 
(1,3) circle (2pt) ;
\fill 
(4,3) circle (2pt) 
(3,3) circle (2pt) 
(2,3) circle (2pt) ;
\node at (4,3) [above right, inner sep=1] {\scriptsize $-$};
\node at (3,3) [above right, inner sep=1] {\scriptsize $+$};
\node at (2,3) [above right, inner sep=1] {\scriptsize $-$};
\draw[->](4+0.07,3)--(5-0.07,3);
\draw[->](3+0.07,3)--(4-0.07,3);
\draw[->](2+0.07,3)--(3-0.07,3);
\draw[->](1+0.07,3)--(2-0.07,3);

\filldraw [fill = white] 
(6,2) circle (2pt) 
(2,2) circle (2pt) ;
\fill 
(5,2) circle (2pt) 
(4,2) circle (2pt) 
(3,2) circle (2pt) ;
\node at (5,2) [above right, inner sep=1] {\scriptsize $+$};
\node at (4,2) [above right, inner sep=1] {\scriptsize $-$};
\node at (3,2) [above right, inner sep=1] {\scriptsize $+$};
\draw[->](5+0.07,2)--(6-0.07,2);
\draw[->](4+0.07,2)--(5-0.07,2);
\draw[->](3+0.07,2)--(4-0.07,2);
\draw[->](2+0.07,2)--(3-0.07,2);

\filldraw [fill = white] 
(7,1) circle (2pt) 
(3,1) circle (2pt) ;
\fill 
(6,1) circle (2pt) 
(5,1) circle (2pt) 
(4,1) circle (2pt) ;
\node at (6,1) [above right, inner sep=1] {\scriptsize $-$};
\node at (5,1) [above right, inner sep=1] {\scriptsize $+$};
\node at (4,1) [above right, inner sep=1] {\scriptsize $-$};
\draw[->](6+0.07,1)--(7-0.07,1);
\draw[->](5+0.07,1)--(6-0.07,1);
\draw[->](4+0.07,1)--(5-0.07,1);
\draw[->](3+0.07,1)--(4-0.07,1);

\filldraw [fill = white] 
(8,0) circle (2pt) 
(7,0) circle (2pt) 
(6,0) circle (2pt) 
(5,0) circle (2pt) 
(4,0) circle (2pt) ;
\draw[->](7+0.07,0)--(8-0.07,0);
\draw[->](6+0.07,0)--(7-0.07,0);
\draw[->](5+0.07,0)--(6-0.07,0);
\draw[->](4+0.07,0)--(5-0.07,0);

\draw[->](5-0.05,3+0.05)--(4+0.05,4-0.05);
\draw[->](4-0.05,3+0.05)--(3+0.05,4-0.05);
\draw[->](3-0.05,3+0.05)--(2+0.05,4-0.05);
\draw[->](2-0.05,3+0.05)--(1+0.05,4-0.05);
\draw[->](1-0.05,3+0.05)--(0+0.05,4-0.05);

\draw[->](6-0.05,2+0.05)--(5+0.05,3-0.05);
\draw[->](5-0.05,2+0.05)--(4+0.05,3-0.05);
\draw[->](4-0.05,2+0.05)--(3+0.05,3-0.05);
\draw[->](3-0.05,2+0.05)--(2+0.05,3-0.05);
\draw[->](2-0.05,2+0.05)--(1+0.05,3-0.05);

\draw[->](7-0.05,1+0.05)--(6+0.05,2-0.05);
\draw[->](6-0.05,1+0.05)--(5+0.05,2-0.05);
\draw[->](5-0.05,1+0.05)--(4+0.05,2-0.05);
\draw[->](4-0.05,1+0.05)--(3+0.05,2-0.05);
\draw[->](3-0.05,1+0.05)--(2+0.05,2-0.05);

\draw[->](8-0.05,0+0.05)--(7+0.05,1-0.05);
\draw[->](7-0.05,0+0.05)--(6+0.05,1-0.05);
\draw[->](6-0.05,0+0.05)--(5+0.05,1-0.05);
\draw[->](5-0.05,0+0.05)--(4+0.05,1-0.05);
\draw[->](4-0.05,0+0.05)--(3+0.05,1-0.05);
\end{tikzpicture}
\end{center}
\end{enumerate}
\end{ex}

\subsection{Jacquet modules}
The graph $\EE(\LL_\rho) = (\VV(\LL_\rho), \less, f_\rho)$ defined in the previous subsection 
knows Jacquet modules of $\pi(\LL)$. 
First, we consider derivatives of $\pi(\LL)$.
\begin{prop}\label{der}
For $x \in \R$, 
the derivative $D_{\rho|\cdot|^x}(\pi(\LL))$ is nonzero
if and only if 
$\EE(\LL_\rho)$ has a minimal vertex of the form $(x,y)$ for some $y$ 
such that $f_\rho(x,y) = 0$. 
In this case, 
$D_{\rho|\cdot|^x}(\pi(\LL))$ is also a ladder representation.
If we write $D_{\rho|\cdot|^x}(\pi(\LL)) = \pi(\LL')$, 
then $\LL'_{\rho'} = \LL_{\rho'}$ for all $\rho' \not\cong \rho$, 
and the graph $\EE(\LL_\rho')$ is given from $\EE(\LL_\rho)$ 
by removing two vertices $(x,y)$ and $(-x,-(t_\rho-2l_\rho-1)-y)$ from $\VV(\LL_\rho)$, 
where $t_\rho = |X_\rho|$.
\end{prop}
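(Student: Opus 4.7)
The plan is to reduce the computation of $D_{\rho|\cdot|^x}(\pi(\LL))$ to known derivative rules on Steinberg segments and on tempered representations of good parity, and then to match the resulting combinatorics with the graph $\EE(\LL_\rho)$. Write the standard module containing $\pi(\LL)$ as $I(\LL) = A \rtimes \pi(\phi,\ep)$, where $A$ is the $\GL$-factor built out of the ladder segments $\Delta_{\rho'}[x_j,-x_{t_{\rho'}-j+1}]$. Apply Tadi\'c's formula (Proposition \ref{Tadic}) to obtain $\mu^*(I(\LL)) = M^*(A) \rtimes \mu^*(\pi(\phi,\ep))$, and extract the $[\rho|\cdot|^x] \otimes [-]$ component of the first tensor slot. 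Since $\pi(\LL) \hookrightarrow I(\LL)$, the derivative $D_{\rho|\cdot|^x}(\pi(\LL))$ embeds into the socle of the induced expression, so it suffices to analyse this expression explicitly.

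Next, I would classify the possible sources of a leading $\rho|\cdot|^x$. There are exactly two types of contributions. First, $\rho|\cdot|^x$ may come from the top of a Steinberg segment $\Delta_\rho[x_j,-x_{t_\rho-j+1}]$, forcing $x = x_j$ for some $1 \le j \le l_\rho$; geometrically, this is the rightmost vertex of row $l_\rho+1-j$ of $\EE(\LL_\rho)$. Second, it may come from $\pi(\phi,\ep)$ via the M{\oe}glin-type derivative formula, forcing $x = x_i$ for some $l_\rho+1 \le i \le t_\rho-l_\rho$ together with the sign compatibility $\ep(\rho\boxtimes S_{2x_i+1}) = (-1)^{i-l_\rho-1}\eta_\rho$ needed to reduce $S_{2x_i+1}$ to $S_{2x_i-1}$. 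The two cases unify as follows: availability of some source at exponent $x$ is equivalent to the existence of a minimal vertex of $\EE(\LL_\rho)$ of the form $(x,y)$, and the coloring condition $f_\rho(x,y)=0$ is exactly the sign compatibility (which is vacuous for Steinberg rows but encodes the tempered parameter constraint in the middle rows). This gives the ``only if'' direction: if no such $(x,y)$ exists, every potential contribution to $D_{\rho|\cdot|^x}(I(\LL))$ vanishes.

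For the ``if'' direction, I would construct the reduced ladder datum $\LL'$ explicitly by deleting the pair of vertices $(x,y)$ and $(-x,-(t_\rho-2l_\rho-1)-y)$ from $\VV(\LL_\rho)$. The mirror vertex is forced by the self-duality $\rho \cong \rho^\vee$, which couples exponents $x$ and $-x$ in the $G_n$-Jacquet module (reflected, e.g., in the compatibility $D_\rho \circ p_\s = p_{\s'}\circ D_\rho$ recalled in \S\ref{rep}). A direct check against Definition \ref{def:ladder} shows that the resulting triple $\LL'_\rho$ again satisfies the ladder axioms: the cardinality/sum condition is preserved because two vertices of opposite ``sign'' are removed simultaneously, and the parity condition on $\eta_\rho$ survives the excision. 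One then verifies that $I(\LL')$ appears as a summand of the induced expression produced in Step 1, with $\pi(\LL')$ in its socle, and that this constituent is the unique one that arises from $\pi(\LL) \hookrightarrow I(\LL)$.

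The main obstacle will be the \emph{uniqueness} part of the last step: showing that among all potential irreducible constituents of $D_{\rho|\cdot|^x}(I(\LL))$, exactly one lies in $D_{\rho|\cdot|^x}(\pi(\LL))$. This is where the minimality of $(x,y)$ in the directed graph becomes essential --- a non-minimal vertex would correspond to a $\rho|\cdot|^x$ that can only be extracted after first moving a neighboring cuspidal piece, and such contributions instead sit in $D_{\rho|\cdot|^x}(I(\LL)/\pi(\LL))$. A careful socle/Frobenius reciprocity argument, combined with induction on $|\VV(\LL_\rho)|$ (so that the reduced graph is already handled by the inductive hypothesis), should suffice. The most delicate sub-cases are the ``boundary'' values $x \in \{0,\pm 1/2\}$, where self-duality produces cancellations governed precisely by the parity conditions in Definition \ref{def:ladder}(4); these will likely need direct verification.
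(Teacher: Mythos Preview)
Your route is quite different from the paper's. The paper's proof is essentially two sentences: first observe that $D_{\rho|\cdot|^x}\circ D_{\rho|\cdot|^x}(\pi(\LL))=0$ for every $x$ (from Tadi\'c's formula together with \cite[Lemma~7.3]{X1}), which forces $D_{\rho|\cdot|^x}(\pi(\LL))$ to be irreducible or zero; then invoke \cite[Theorem~7.1]{AM}, an explicit formula for the derivative of an arbitrary irreducible representation in terms of its Langlands data. All the substance is delegated to that external result, and the graph-theoretic statement is just a translation of its output for the ladder datum $\LL$.

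Your plan instead works directly from the standard module $I(\LL)$, classifying the possible sources of a leading $\rho|\cdot|^x$ and then arguing which of them survive into the subrepresentation $\pi(\LL)$. The source classification is essentially correct, and you are right that the passage from $I(\LL)$ to $\pi(\LL)$ is where the content lies. But the step you flag as the ``main obstacle'' is not a detail to be patched: it is precisely the content of \cite[Theorem~7.1]{AM}. The socle/Frobenius argument you sketch (showing that $\pi(\LL)\hookrightarrow\rho|\cdot|^x\rtimes\pi_0$ forces $\pi_0\cong\pi(\LL')$) requires controlling how the Langlands subrepresentation interacts with derivatives, and the induction on $|\VV(\LL_\rho)|$ does not by itself supply this---one still needs to know a priori that $D_{\rho|\cdot|^x}(\pi(\LL))$ is irreducible, which is where the $D^2=0$ observation (absent from your outline) becomes indispensable and where you would effectively be reproving the relevant case of \cite{AM}. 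A smaller imprecision: writing that ``$D_{\rho|\cdot|^x}(\pi(\LL))$ embeds into the socle'' conflates the Grothendieck-group definition of $D_\rho$ with a genuine submodule inclusion; the correct starting point is the Frobenius embedding $\pi(\LL)\hookrightarrow\rho|\cdot|^x\rtimes\pi_0$ for some irreducible $\pi_0$, and the task is then to pin down $\pi_0$.
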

\begin{proof}
Note that $D_{\rho|\cdot|^x} \circ D_{\rho|\cdot|^x}(\pi(\LL)) = 0$ for any $x \in \R$
by Tadi\'c's formula (Proposition \ref{Tadic}) and \cite[Lemma 7.3]{X1}. 
Now the proposition follows from \cite[Theorem 7.1]{AM}.
\end{proof}

As a consequence,
the class of ladder representations is preserved by derivatives, 
which is the same as in the general linear groups case (see \cite{KL}).
Proposition \ref{der} immediately implies the following corollary. 

\begin{cor}\label{sc}
Let $\LL$ be a ladder datum for $G_n$. 
\begin{enumerate}
\item
The ladder representation $\pi(\LL)$ is supercuspidal if and only if $m_\rho = 0$ 
for all $\rho \in \Cusp^\GL$ with $\rho^\vee \cong \rho$. 

\item
More generally, the unique representation $\sigma \in \supp(\pi(\LL)) \cap \Cusp^G$
is a ladder representation with the ladder datum $\LL_\sigma$ such that 
for any $\rho \in \Cusp^\GL$ with $\rho^\vee \cong \rho$, 
the graph $\EE(\LL_{\sigma,\rho})$ is given from $\EE(\LL_\rho)$ 
by removing all vertices $(x,y)$ such that $f_\rho(x,y) = 0$.
\end{enumerate}
\end{cor}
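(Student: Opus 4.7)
The plan is to prove both parts simultaneously by induction on $M(\LL) \coloneqq \sum_\rho m_\rho$, using Proposition \ref{der} as the inductive engine: each nonzero $\rho$-derivative of $\pi(\LL)$ is again a ladder representation whose associated graph is obtained from that of $\pi(\LL)$ by deleting a single symmetric pair of zero-valued vertices.

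The base case $M(\LL) = 0$ is when every $\EE(\LL_\rho)$ has $f_\rho$ nowhere vanishing. Proposition \ref{der} then forces $D_{\rho|\cdot|^x}(\pi(\LL)) = 0$ for all $\rho$ and $x$, so $\pi(\LL)$ is supercuspidal. This gives the $(\Leftarrow)$ direction of $(1)$, and $(2)$ holds trivially with $\sigma = \pi(\LL)$ and $\LL_\sigma = \LL$.

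For the inductive step $M(\LL) > 0$, choose $\rho_0$ with $m_{\rho_0} > 0$. The key combinatorial claim, addressed in the final paragraph, is that $\EE(\LL_{\rho_0})$ admits a minimal vertex $v_0 = (x, y)$ with $f_{\rho_0}(v_0) = 0$. Granting this, Proposition \ref{der} yields a nonzero derivative $\pi(\LL') \coloneqq D_{\rho_0|\cdot|^x}(\pi(\LL))$, a ladder representation whose graph agrees with $\EE(\LL_\rho)$ for $\rho \not\cong \rho_0$ and differs from $\EE(\LL_{\rho_0})$ by removal of the zero-valued pair $\{v_0,\, (-x, -(t_{\rho_0} - 2l_{\rho_0} - 1) - y)\}$. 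Hence $M(\LL') = M(\LL) - 1$. This nonzero derivative rules out the supercuspidality of $\pi(\LL)$, so the $(\Rightarrow)$ direction of $(1)$ follows. Moreover $\supp(\pi(\LL'))$ is $\supp(\pi(\LL))$ with the self-dual pair $\{\rho_0|\cdot|^x, \rho_0|\cdot|^{-x}\}$ deleted, so the unique supercuspidal representation $\sigma \in \supp(\pi(\LL)) \cap \Cusp^G$ coincides with the one in $\supp(\pi(\LL'))$. The inductive hypothesis applied to $\pi(\LL')$ identifies $\sigma = \pi(\LL_\sigma)$ with $\EE(\LL_{\sigma, \rho})$ obtained from $\EE(\LL'_\rho)$ by erasing every zero-valued vertex, which is the same as erasing every zero-valued vertex from $\EE(\LL_\rho)$. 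This proves $(2)$.

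The combinatorial claim is the main obstacle. If $l_{\rho_0} > 0$, the topmost row $i = -l_{\rho_0}$ of $\EE(\LL_{\rho_0})$ sits entirely outside the non-zero band of $f_{\rho_0}$ and admits no diagonal arrows from above, so its rightmost vertex $(x_1, l_{\rho_0})$ is a minimal zero-valued vertex. If $l_{\rho_0} = 0$, then either $x_1$ exceeds its permitted lower bound---so $(x_1, 0)$ itself is minimal and zero-valued---or $m_{\rho_0} > 0$ forces a strict gap $x_{i+1} - x_i > 1$ in the increasing sequence; for the minimal such $i$, the rightmost vertex $(x_{i+1}, -i)$ of row $i$ lies outside the non-zero band, and the gap ensures that $(x_{i+1} - 1, -(i-1))$ is not a vertex of row $i - 1$, so no diagonal arrow enters it. This again yields a minimal zero-valued vertex. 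The half-integer coordinate case is handled by a parallel argument using the corresponding coloring formula.
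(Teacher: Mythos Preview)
Your proof is correct and follows exactly the route the paper intends: the paper simply writes ``Proposition~\ref{der} immediately implies the following corollary'', and your argument is the natural unpacking of that claim via induction on $M(\LL)=\sum_\rho m_\rho$. The combinatorial existence of a minimal zero-valued vertex when $m_\rho>0$ is the only point requiring care, and your case analysis (top row when $l_\rho>0$; rightmost vertex at the first gap when $l_\rho=0$) handles it correctly.
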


Recall that we have a $\Z$-linear map 
$\mu^* \colon \Rr^G \rightarrow \Rr^\GL \otimes \Rr^G$ in \S \ref{rep}.
For $\pi \in \Irr^G$, 
when 
\[
\mu^*([\pi]) = \sum_{i \in I} [\tau_i] \otimes [\pi_i]
\]
with $\tau_i \in \Irr^\GL$ and $\pi_i \in \Irr^G$, 
for fixed $\rho \in \Cusp^\GL$ with $\rho \cong \rho^\vee$, 
set 
\[
\mu_\rho^*([\pi]) \coloneqq \sum_{i \in I_\rho} [\tau_i] \otimes [\pi_i], 
\]
where
$I_\rho$ is the subset of $I$ consisting of indices $i$ such that 
$\tau_i$ is a subquotient of $\rho|\cdot|^{x_1} \times \dots \times \rho|\cdot|^{x_r}$ for some $x_1,\dots,x_r \in \R$.
By the same argument as \cite[Lemma 2.6]{At}, 
when $\pi \in \Irr^G$ is of good parity, 
$\mu^*([\pi])$ is recovered from $\mu^*_\rho([\pi])$ for all $\rho \in \Cusp^\GL$ with $\rho \cong \rho^\vee$. 
\par

When $\pi$ is a ladder representation of $G_n$, 
one can compute $\mu^*_\rho([\pi])$.
\begin{thm}\label{jacquet}
Let $\LL = (\ub{X}, \ub{l}, \ub{\eta})$ be a ladder datum for $G_n$. 
Write $X_\rho = \{\rho|\cdot|^{x_1}, \dots, \rho|\cdot|^{x_t}\}$ with $x_1 < \dots < x_t$ 
and $(l,\eta) \coloneqq (l_\rho,\eta_\rho)$.
Then 
\[
\mu_\rho^*([\pi(\LL)]) = \sum_{\ub{y} = (y_1,\dots,y_t)} 
\Big[ L(\Delta_\rho[x_1,y_1+1],\dots,\Delta_\rho[x_t,y_t+1]) \Big] \otimes \Big[\pi(\LL_{\ub{y}})\Big], 
\]
where 
$\ub{y} = (y_1,\dots,y_t)$ runs over the subset of $\R^t$ such that 
\begin{itemize}
\item
$-x_{t-i+1}-1 \leq y_i \leq x_i$ and $y_i \equiv x_i \bmod \Z$ for $1 \leq i \leq t$; 
\item
$y_1 < \dots < y_t$; 
\item
$y_i+y_{t-i+1} \geq -1$ for $1 \leq i \leq l$; 
\item
if $l+1 \leq i \leq t-l$, then 
\[
y_i \geq \left\{
\begin{aligned}
&i-l-1 \iif x_1,\dots,x_t \in \Z, \\
&i-l-1-\half{1}\eta \iif x_1,\dots,x_t \in \half{1}\Z \setminus \Z,
\end{aligned}
\right. 
\]
\end{itemize}
and $\LL_{\ub{y}} = (\ub{Y}, \ub{l'}, \ub{\eta})$ is defined by 
\begin{itemize}
\item
$Y_{\rho'} \coloneqq X_{\rho'}$ and $l'_{\rho'} \coloneqq l_{\rho'}$ for $\rho' \not\cong \rho$; 
\item
$Y_\rho \coloneqq \{\rho|\cdot|^{y_i} \;|\; 1 \leq i \leq t,\; y_i+y_{t-i+1} \geq 0\}$; 
\item
$l'_\rho \coloneqq l_\rho - \#\{1\leq i \leq l \;|\; y_i+y_{t-i+1} =-1 \}$.
\end{itemize}
\end{thm}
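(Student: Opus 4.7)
The approach is induction on $\sum_\rho m_\rho$ (where $2 m_\rho = |f_\rho^{-1}(0)|$ counts the ``discrete'' vertices in $\EE(\LL_\rho)$), with Proposition \ref{der} supplying the inductive step via $\rho$-derivatives. In the base case $\sum_\rho m_\rho = 0$, Corollary \ref{sc} says $\pi(\LL)$ is supercuspidal, so $\mu_\rho^*([\pi(\LL)]) = [\1_{\GL_0(F)}] \otimes [\pi(\LL)]$; one verifies that the combinatorial constraints of the theorem force the unique tuple $\ub{y} = (x_1, \dots, x_t)$, for which every $\Delta_\rho[x_i, y_i+1]$ is trivial by our conventions and $\LL_{\ub{y}} = \LL$ directly from the definitions.

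\textbf{Inductive step.} For the inductive step, pick a minimal vertex $(x, y)$ of $\EE(\LL_\rho)$ with $f_\rho(x, y) = 0$; by Proposition \ref{der}, $D_{\rho|\cdot|^x}(\pi(\LL)) = \pi(\LL')$ is again a ladder representation with strictly smaller $\sum_\rho m_\rho$. I would apply $D_{\rho|\cdot|^x}$ to the purported formula and compare with the inductive hypothesis applied to $\LL'$. On the right-hand side, the derivative acts either on the leftmost Steinberg factor $\Delta_\rho[x_i, y_i+1]$ (nontrivially only when an endpoint equals $x$) or on the tail $\pi(\LL_{\ub{y}})$ (controlled again by Proposition \ref{der} on $\LL_{\ub{y}}$). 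What remains is a purely combinatorial bijection between the tuples $\ub{y}$ for $\LL$ that ``consume'' the vertex $(x,y)$ and the tuples $\ub{y}'$ for $\LL'$, which follows once one matches the conditions $y_i + y_{t-i+1} \geq -1$ and $y_i \geq i - l - 1$ (respectively the half-integer analogue) against the corresponding conditions for $\LL'$.

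\textbf{Upper bound via Tadi\'c.} An essential complement is to apply Tadi\'c's formula (Proposition \ref{Tadic}) to $\mu^*([I(\LL)])$, where $I(\LL)$ is the standard module of which $\pi(\LL)$ is the unique irreducible subrepresentation. Extracting the $\rho$-part yields an explicit but large sum, and the embedding $\pi(\LL) \hookrightarrow I(\LL)$ gives the termwise bound $\mu_\rho^*([\pi(\LL)]) \leq \mu_\rho^*([I(\LL)])$ in $\Rr^\GL \otimes \Rr^G$. One checks directly from the Langlands data that each tensor product $[L(\Delta_\rho[x_1, y_1+1], \dots, \Delta_\rho[x_t, y_t+1])] \otimes [\pi(\LL_{\ub{y}})]$ of the asserted form appears with multiplicity at most one in $\mu_\rho^*([I(\LL)])$; combined with the inductive step this pins down precisely which terms survive for $\pi(\LL)$ itself. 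Uniqueness of the identification is reinforced by the fact that the GL-part is a ladder representation of $\GL$, hence determined by its segments $[x_i, y_i+1]$, which in turn recover $\ub{y}$ thanks to the strict inequality $y_1 < \dots < y_t$.

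\textbf{Main obstacle.} The principal technical difficulty will be the bookkeeping of the sign character $\ub{\eta}$: the theorem asserts that $\LL_{\ub{y}}$ carries the \emph{same} $\ub{\eta}$ as $\LL$, but the signs $\ep(\rho \boxtimes S_{2x_i+1}) = (-1)^{i-l-1}\eta_\rho$ are sensitive to the reindexing caused by moving indices from the ``tempered'' range $[l+1, t-l]$ into the ``Steinberg'' range $[1, l]$ when $y_i + y_{t-i+1} = -1$, i.e., when $l'_\rho < l_\rho$. Verifying that these sign shifts conspire correctly through the derivative step — especially in the half-integer case where the lower bound $y_i \geq i - l - 1 - \frac{1}{2}\eta$ depends on $\eta$ and breaks the symmetry between the two parities — is where the most delicate combinatorics lie, even though each individual verification is mechanical.
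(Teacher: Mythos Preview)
Your instinct --- push everything through $\rho$-derivatives and Proposition~\ref{der} --- is right, and is what the paper does. But there is a genuine confusion in your inductive step, and the paper organises the argument rather differently.

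\textbf{The confusion.} When you say ``the derivative acts either on the leftmost Steinberg factor $\Delta_\rho[x_i,y_i+1]$ \dots\ or on the tail $\pi(\LL_{\ub y})$'', you are implicitly treating each summand on the right as a parabolically induced representation $L(\dots)\rtimes\pi(\LL_{\ub y})$ subject to a Leibniz rule. It is not: the identity lives in $\Rr^{\GL}\otimes\Rr^{G}$. The operator that makes an induction go is $D^{\GL,\mathrm{left}}_{\rho|\cdot|^x}\otimes\id$: by transitivity of Jacquet functors it sends $\mu_\rho^*([\pi(\LL)])$ to $\mu_\rho^*([\pi(\LL')])$, and on the right it hits only the $\GL$-ladder $L(\Delta_\rho[x_1,y_1+1],\dots,\Delta_\rho[x_t,y_t+1])$, never the tail. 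To evaluate that you need the derivative formula for $\GL$-ladders, i.e.\ \cite[Theorem~7~(2)]{LM} (or \cite{KL}); this ingredient is absent from your proposal. With it in hand, the ``combinatorial bijection'' is just $\ub y\mapsto\ub y$ between the admissible tuples for $\LL$ and for $\LL'$, and your Tadi\'c upper-bound paragraph becomes unnecessary: an element of $\Rr(\GL_{dk}(F))$ with $k>0$ is already determined by all of its left $\rho|\cdot|^x$-derivatives.

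\textbf{How the paper does it.} The paper does not induct on $\sum_\rho m_\rho$. For each fixed target $\LL_0$ on the $G$-side it sets
\[
A=\sum_{i:\,\pi_i\cong\pi(\LL_0)}[\tau_i],
\qquad
B=\sum_{\ub y:\,\LL_{\ub y}=\LL_0}\bigl[L(\Delta_\rho[x_1,y_1+1],\dots,\Delta_\rho[x_t,y_t+1])\bigr],
\]
and proves $A=B$ in $\Rr^{\GL}$ by comparing $[\Jac_Q(A)]$ with $[\Jac_Q(B)]$ for $Q$ the minimal ($\GL_d^{\,k}$) parabolic. The first is a sum over sequences of removable white vertices of $\EE(\LL_\rho)$ terminating at $\EE(\LL_{0,\rho})$, obtained by iterating Proposition~\ref{der}; the second is computed by \cite[Theorem~7~(2)]{LM}. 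These parametrising sets visibly coincide, which finishes the proof. This is your intended induction, fully unrolled in one stroke; in particular the $\ub\eta$-bookkeeping you flag never surfaces, because once $\LL_0$ is fixed the whole comparison is on the $\GL$-side and the preservation of $\ub\eta$ under derivatives is already encoded in the graph formalism behind Proposition~\ref{der}.
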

\begin{proof}
Write $d \coloneqq d_\rho$. 
Let $P_{dk}$ be the standard parabolic subgroup of $G_n$ 
with Levi subgroup isomorphic to $\GL_{dk}(F) \times G_{n-dk}$.
Write 
\[
[\Jac_{P_{dk}}(\pi(\LL))] = \sum_{i \in I} [\tau_i] \otimes [\pi_i]
\]
with $\tau_i \in \Irr(\GL_{dk}(F))$ and $\pi_i \in \Irr(G_{n-dk})$. 
Set $I_\rho$ to be the subset of $I$ consisting of indices $i$ such that 
$\tau_i$ is a subquotient of $\rho|\cdot|^{x_1} \times \dots \times \rho|\cdot|^{x_k}$ for some $x_1,\dots,x_k \in \R$.
Note that by Proposition \ref{der}, 
we see that $\pi_i$ is a ladder representation of $G_{n-dk}$. 
\par

We claim that for a fixed ladder datum $\LL_0$ of $G_{n-dk}$
such that $\LL_{0,\rho'} = \LL_{\rho'}$ for $\rho' \not\cong \rho$,  
the equation 
\[
\sum_{\substack{i \in I_\rho \\ \pi_i \cong \pi(\LL_0)}} [\tau_i]
=
\sum_{\substack{\ub{y} = (y_1,\dots,y_t) \\ \pi(\LL_{\ub{y}}) \cong \pi(\LL_0)}} 
\Big[ L(\Delta_\rho[x_1,y_1+1],\dots,\Delta_\rho[x_t,y_t+1]) \Big]
\]
holds. 
For simplicity, we write $A$ (\resp $B$) for the left (\resp the right) hand side of this equation.
Let $Q$ be the standard parabolic subgroup of $\GL_{dk}(F)$ 
with Levi subgroup isomorphic to $\GL_d(F) \times \dots \times \GL_d(F)$. 
To prove $A=B$, it is enough to show that $[\Jac_Q(A)] = [\Jac_Q(B)]$.
\par

By applying Proposition \ref{der} repeatedly, 
we see that 
\[
[\Jac_Q(A)] = \sum_{\ub{\lambda} = (\lambda_1,\dots,\lambda_k)}
\rho|\cdot|^{\lambda_1} \otimes \dots \otimes \rho|\cdot|^{\lambda_k},
\] 
where $\ub{\lambda} = (\lambda_1,\dots,\lambda_k)$ runs over the subset of $\R^k$ such that 
\begin{itemize}
\item
the graph $\EE_0 \coloneqq \EE(\LL_\rho)$ has a minimal vertex of the form $(\lambda_1,\mu_1)$ for some $\mu_1$
such that $f_\rho(\lambda_1,\mu_1) = 0$; 

\item
for $1 \leq i \leq k-1$, the graph
\[
\EE_{i} \coloneqq \EE_{i-1} \setminus \{(\lambda_i,\mu_i), (-\lambda_{i}, -(t-2l-1)-\mu_i)\}
\]
has a minimal vertex of the form $(\lambda_{i+1},\mu_{i+1})$ for some $\mu_{i+1}$
such that $f_\rho(\lambda_{i+1},\mu_{i+1}) = 0$; 

\item
the graph
\[
\EE_{k} \coloneqq \EE_{k-1} \setminus \{(\lambda_k,\mu_k), (-\lambda_{k}, -(t-2l-1)-\mu_k)\}
\]
is equal to $\EE(\LL_{0,\rho})$.
\end{itemize}
By \cite[Theorem 7 (2)]{LM}, one can compute $[\Jac_Q(B)]$ similarly. 
Then we see that $[\Jac_Q(A)] = [\Jac_Q(B)]$, as desired. 
This completes the proof.
\end{proof}

Note that even if $\ub{y} \not= \ub{y'}$, 
one might have $\LL_{\ub{y}} = \LL_{\ub{y'}}$. 
\begin{ex}
Set $\rho = \1_{\GL_1(F)}$. 
Consider $\LL = (\ub{X}, \ub{l}, \ub{\eta})$ with $X_{\rho'} = \emptyset$ for $\rho' \not\cong \rho$
and 
\[
\LL_{\rho} = (\{\rho|\cdot|^0, \rho|\cdot|^1, \rho|\cdot|^2\}, 1, +1). 
\]
Then $\pi(\LL) = L(\Delta_{\rho}[0,-2]; \pi(\rho \boxtimes S_3,+)) \in \Irr(\Sp_8(F))$. 
If $P$ is the standard Siegel parabolic subgroup, 
then Theorem \ref{jacquet} says that
\[
\Big[\Jac_P(\pi(\LL))\Big] = 
\Big[L(\Delta_\rho[0,-2], \rho|\cdot|^1)\Big] \otimes \Big[\1_{\Sp_0(F)}\Big]
+ 
\Big[L(\Delta_\rho[0,-1], \rho|\cdot|^1, \rho|\cdot|^2)\Big] \otimes \Big[\1_{\Sp_0(F)}\Big].
\]
\end{ex}

By Theorem \ref{jacquet}, we see that
all irreducible subquotients of $\Jac_P(\pi(\LL))$ have distinct cuspidal supports. 
Hence we have: 
\begin{cor}\label{semi}
Let $\pi(\LL)$ be a ladder representation of $G_n$. 
Then for any parabolic subgroup $P$ of $G_n$, 
the Jacquet module $\Jac_P(\pi(\LL))$ is completely reducible. 
\end{cor}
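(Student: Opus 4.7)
My plan is to reduce the statement to the assertion that any two composition factors of $\Jac_P(\pi(\LL))$ have distinct cuspidal supports as irreducible representations of the Levi $M$ of $P$. Once this is established, complete reducibility follows automatically: irreducible representations with distinct cuspidal supports lie in distinct Bernstein blocks of $\Rep(M)$, so morphisms (and higher $\mathrm{Ext}$) between them vanish, and a finite-length representation whose composition factors are in pairwise distinct blocks must decompose as the direct sum of those factors.

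To verify the distinct-cuspidal-support property, I would first treat the case $P = P_{dk}$, with Levi $\GL_{dk}(F) \times G_{n-dk}$, for each $\rho \in \Cusp^\GL$ of degree $d = d_\rho$ with $\rho^\vee \cong \rho$. Combining Theorem \ref{jacquet} with (the analogue of) \cite[Lemma 2.6]{At}, the $\rho$-piece $\mu_\rho^*([\pi(\LL)])$ decomposes as a sum of irreducible tensor products $[\tau_{\ub{y}}] \otimes [\pi(\LL_{\ub{y}})]$ with $\tau_{\ub{y}} = L(\Delta_\rho[x_1,y_1+1], \dots, \Delta_\rho[x_t,y_t+1])$. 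A short combinatorial check then shows that distinct parameters $\ub{y}=(y_1,\dots,y_t)$ with $y_1<\dots<y_t$ yield distinct cuspidal supports for the $\tau_{\ub{y}}$: the multiplicity of $\rho|\cdot|^a$ in $\supp(\tau_{\ub{y}})$ equals $\#\{i : x_i \geq a\} - \#\{i : y_i \geq a\}$, and the strictly increasing sequence $\ub{y}$ is recovered uniquely from the non-increasing step function $a \mapsto \#\{i : y_i \geq a\}$.

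For a general parabolic $P$ with Levi $M' \times G_{n-dk}$, where $M'$ is a Levi subgroup of $\GL_{dk}(F)$, I would invoke transitivity of the Jacquet functor to write
\[
[\Jac_P(\pi(\LL))] = \sum_{\ub{y}} [\Jac_{M'}(\tau_{\ub{y}})] \boxtimes [\pi(\LL_{\ub{y}})].
\]
Since each $\tau_{\ub{y}}$ is itself a ladder representation of $\GL$ (both $x_1<\dots<x_t$ and $y_1<\dots<y_t$ are strictly increasing), Gurevich's theorem (\cite[Corollary 4.11]{G}; see also \cite[Theorem 4.1 (c)]{R}) implies that $\Jac_{M'}(\tau_{\ub{y}})$ is completely reducible. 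Different $\ub{y}$ contribute composition factors lying in distinct Bernstein blocks of $\Rep(M' \times G_{n-dk})$, because already on the $\GL$ side their cuspidal supports differ by the previous paragraph. Hence $\Jac_P(\pi(\LL))$ splits as a direct sum (indexed by $\ub{y}$) of completely reducible representations. The main technical obstacle will be the multiset verification in the second paragraph; after that, the remaining bookkeeping for general $P$ is a formal combination of Theorem \ref{jacquet} and the corresponding $\GL$-result of Gurevich.
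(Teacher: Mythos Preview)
Your argument is correct and is essentially the paper's own proof, spelled out in more detail: the paper simply remarks that by Theorem~\ref{jacquet} all irreducible subquotients of $\Jac_P(\pi(\LL))$ have pairwise distinct cuspidal supports, and this forces semisimplicity. Two minor comments. First, your appeal to Gurevich's result for the inner $\GL$-Jacquet module is a legitimate shortcut but not really needed: since each $\tau_{\ub{y}}$ is again a $\GL$-ladder, the same combinatorial check you give (recovering $\ub{y}$ from the multiplicity function $a\mapsto\#\{i:y_i\ge a\}$) iterates to show directly that the composition factors of $\Jac_P(\pi(\LL))$ have distinct cuspidal supports for \emph{every} standard $P$, which is what the paper asserts. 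Second, a general standard parabolic has Levi $\GL_{n_1}(F)\times\cdots\times\GL_{n_r}(F)\times G_{n_0}$ with $k=n-n_0$ not attached to any single $d_\rho$; your reduction should therefore pass through $P_k$ and use \cite[Lemma~2.6]{At} to assemble the contributions of all $\rho$ into $\Jac_{P_k}(\pi(\LL))$, rather than through $P_{d_\rho k}$ for one fixed $\rho$ as your notation suggests.
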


As an analogue of the general groups case \cite[Corollary 4.11]{G}, 
one might expect that the converse of Corollary \ref{semi} would hold in an appropriate sense. 
However, we have a counterexample as follows. 
\begin{ex}\label{counter}
Consider $\pi = \pi(\phi,\ep) \in \Irr(\SO_7(F))$ with 
$\phi = S_2 \oplus S_4$ and $\ep(S_2) = \ep(S_4) = -1$. 
It is a discrete series representation but not ladder.
For $k \in \{1,2,3\}$, 
let $P_k$ be the standard parabolic parabolic subgroup of $\SO_{7}(F)$
with Levi subgroup isomorphic to $\GL_{k}(F) \times \SO_{7-2k}(F)$. 
Then by \cite[Theorems 4.2, 4.3]{At}, 
with $\phi' = S_2 \oplus S_2$ and $\ep'(S_2) = -1$, 
we have 
\begin{talign*}
\Jac_{P_1}(\pi) &= |\cdot|^{\half{3}} \otimes \pi(\phi',\ep'), \\
\Jac_{P_2}(\pi) &= \Delta[\half{3},\half{1}] \otimes L(|\cdot|^{-\half{1}}; \1_{\SO_1(F)}), \\
\Jac_{P_3}(\pi) &= \Delta[\half{3},-\half{1}] \otimes \1_{\SO_1(F)}.
\end{talign*}
Combining with \cite[9.5 Proposition]{Z}, 
we see that $\Jac_P(\pi)$ is irreducible for every parabolic subgroup $P$ of $\SO_7(F)$.
\end{ex}

\subsection{Aubert duality}\label{sec:aubert}
Recall from \cite{Au} that for $\pi \in \Irr(G_n)$, 
there exists a sign $\epsilon \in \{\pm1\}$ such that 
the virtual representation
\[
\hat\pi \coloneqq \epsilon \sum_{P=MN} (-1)^{\dim A_M} \Ind_P^{G_n} (\Jac_{P}(\pi))
\]
is in fact an irreducible representation, 
where $P=MN$ runs over all standard parabolic subgroups of $G_n$, 
and $A_M$ is the maximal split torus of the center of $M$. 
We call $\hat\pi$ the \emph{Aubert dual} of $\pi$. 
\par

An explicit formula for the Aubert duals of strongly positive discrete series representations 
(\resp irreducible representations with (irreducible) $A$-parameters)
was given in \cite[Theorem 3.6]{Ma1} (\resp \cite[Theorem 6.2]{At2}). 
Using the graph $\EE(\LL_\rho)$, 
we can extend these explicit formulas to the case of ladder representations. 
\par

\begin{thm}\label{aubert}
For a ladder datum $\LL$, 
we define another ladder datum $\hat\LL$ so that 
for $\rho \in \Cusp^\GL$ with $\rho^\vee \cong \rho$, 
the graphs $\EE(\LL_\rho) = (\VV(\LL_\rho), \less, f_\rho)$ 
and $\EE(\hat\LL_\rho) = (\VV(\hat\LL_\rho), \less, \hat{f}_{\rho})$ are related as follows. 

\begin{enumerate}
\item
$(\VV(\hat\LL_\rho), \less)$ is given from $(\VV(\LL_\rho), \less)$ 
by changing the diagonal arrows and the horizontal arrows. 
More precisely, 
\begin{itemize}
\item
if $\rho$ is good, then for $k \in \Z$, 
the line $x+y = k$ (\resp $y=k$) in $\VV(\LL_\rho)$ corresponds to 
the line $y = k$ (\resp $x+y = k$) in $\VV(\hat\LL_\rho)$; 

\item
if $\rho|\cdot|^{\half{1}}$ is good, then for $k \in \Z$, 
the line $x+y = k-\half{1}\eta$ (\resp $y=k$) in $\VV(\LL_\rho)$ corresponds to 
the line $y = k$ (\resp $x+y = k+\half{1}\eta$) in $\VV(\hat\LL_\rho)$.
\end{itemize}

\item
$\hat{f}_\rho \colon \VV(\hat\LL_\rho) \rightarrow \{-1,0,1\}$ 
is given by 
\[
\hat{f}_\rho(\hat{x},\hat{y}) = \left\{
\begin{aligned}
&f_\rho(x,y) \iif \text{$\rho$ is good}, \\
&-f_\rho(x,y) \iif \text{$\rho|\cdot|^{1/2}$ is good}, 
\end{aligned}
\right. 
\]
where $(\hat{x},\hat{y}) \in \VV(\hat\LL_\rho)$ is the vertex 
corresponding to $(x,y) \in \VV(\LL_\rho)$ by (1).
\end{enumerate}
Then the Aubert dual of $\pi(\LL)$ is isomorphic to $\pi(\hat\LL)$. 
\end{thm}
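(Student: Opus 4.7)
The strategy is to prove the theorem by induction on $\sum_\rho m_\rho$, where $2m_\rho = |f_\rho^{-1}(0)|$, using compatibility of Aubert duality with $\rho$-derivatives. The base case $m_\rho = 0$ for all $\rho$ corresponds by Corollary \ref{sc} to $\pi(\LL)$ being supercuspidal; here $\widehat{\pi(\LL)} = \pi(\LL)$ and the arrow-swap recipe is vacuously consistent since no vertex satisfies $f_\rho = 0$. Alongside this, I would confirm the theorem on the two distinguished subclasses where the Aubert dual is already in the literature: strongly positive discrete series (the case $l_\rho = 0$ for all $\rho$), handled by Matic's formula \cite[Theorem 3.6]{Ma1}, and irreducible representations with irreducible $A$-parameter (the case $X_\rho$ is a single segment for one $\rho$ and empty otherwise), handled by \cite[Theorem 6.2]{At2}. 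In each case I would translate the existing formula into the graph language of Definition \ref{graph} and check that it matches the arrow-swap and sign prescription of the theorem.

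For the inductive step, pick a minimal vertex $(x,y) \in \VV(\LL_\rho)$ with $f_\rho(x,y) = 0$, so that Proposition \ref{der} identifies $D_{\rho|\cdot|^x}(\pi(\LL))$ with $\pi(\LL')$, where $\LL'$ is obtained by removing the pair $(x,y)$ and $(-x,-(t_\rho-2l_\rho-1)-y)$. By the induction hypothesis $\widehat{\pi(\LL')} = \pi(\hat{\LL'})$. Combining this with the standard compatibility of Aubert duality and $\rho|\cdot|^x$-derivatives (for self-dual $\rho$, the highest $\rho|\cdot|^x$-derivative of $\pi$ corresponds under Aubert duality to the highest $\rho|\cdot|^{-x}$-derivative of $\hat\pi$, as used in the proof of \cite[Theorem 6.2]{At2}), I would deduce that $D_{\rho|\cdot|^{-x}}(\widehat{\pi(\LL)})$ coincides with $\pi(\hat{\LL'})$. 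A parallel computation via the graph $\EE(\hat\LL_\rho)$ — in which the arrow-swap turns the minimal vertex $(x,y)$ of $\EE(\LL_\rho)$ into a minimal vertex at $x$-coordinate $-x$ in $\EE(\hat\LL_\rho)$ — shows that $\pi(\hat\LL)$ satisfies the same identity. Since the maximal $\rho|\cdot|^{-x}$-derivative together with the cuspidal support determines an irreducible representation of good parity uniquely (as the unique irreducible subrepresentation of the relevant induced module), one concludes $\widehat{\pi(\LL)} = \pi(\hat\LL)$.

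The main obstacle is the combinatorial bookkeeping: one must verify that removing a minimal pair from $\EE(\LL_\rho)$ corresponds, after the arrow-swap, precisely to removing the matching minimal pair from $\EE(\hat\LL_\rho)$, and that the sign assignments $\hat f_\rho = \pm f_\rho$ (depending on whether $\rho$ or $\rho|\cdot|^{1/2}$ is good), together with the consistency conditions on $\eta_\rho$ and $l_\rho$ in Definition \ref{def:ladder}, are preserved under the operation. The most delicate situation occurs near the symmetry axis $y = -(t_\rho-2l_\rho-1)/2$: there, removing a \emph{central} minimal vertex induces simultaneous changes in both $l_\rho$ and $\eta_\rho$, and these must be correctly mirrored on the dual side after the arrow-swap. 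A further subtlety is to align the two base-case verifications (Matic's and Atobe's formulas), as their conventions describe the Aubert dual through different combinatorial data that must be reconciled through the graph picture.
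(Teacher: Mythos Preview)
Your approach is essentially the same as the paper's. The paper's proof is the one-liner ``This follows from \cite[Algorithm 4.1]{AM} together with Proposition \ref{der}'', and Algorithm 4.1 of \cite{AM} is precisely the recursive procedure you describe: peel off a highest $\rho|\cdot|^x$-derivative, use the compatibility $D_{\rho|\cdot|^{-x}}^{(k)}(\hat\pi)=\widehat{D_{\rho|\cdot|^x}^{(k)}(\pi)}$ for the maximal $k$, and reconstruct $\hat\pi$ as the socle of the induced representation. Proposition \ref{der} supplies exactly the input needed (what $D_{\rho|\cdot|^x}(\pi(\LL))$ is, and that $k=1$ is always maximal), so your inductive scaffolding is just an unpacking of the cited algorithm.

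Two minor remarks. First, the separate verifications via \cite[Theorem 3.6]{Ma1} and \cite[Theorem 6.2]{At2} are redundant: once the induction is set up from the supercuspidal base case, these special cases fall out and need not be checked independently. Second, the combinatorial bookkeeping you flag (that the arrow-swap sends the minimal pair removed from $\EE(\LL_\rho)$ to the minimal pair that Proposition \ref{der} removes from $\EE(\hat\LL_\rho)$, with the correct sign on $\hat f_\rho$) is genuinely the content to be checked, and the paper absorbs it into the citation rather than writing it out; your identification of the ``central'' minimal-vertex case as the delicate one is accurate.
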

\begin{proof}
This follows from \cite[Algorithm 4.1]{AM} together with Proposition \ref{der}. 
\end{proof}

\begin{ex}
Consider $\EE(\LL_\rho)$ as follows:
\begin{center}
\begin{tikzpicture}
\draw[help lines, step=1,dashed] (1,0) grid (9,2);
\node at (1,2) [above] {$-4$};
\node at (2,2) [above] {$-3$};
\node at (3,2) [above] {$-2$};
\node at (4,2) [above] {$-1$};
\node at (5,2) [above] {$0$};
\node at (6,2) [above] {$1$};
\node at (7,2) [above] {$2$};
\node at (8,2) [above] {$3$};
\node at (9,2) [above] {$4$};

\filldraw [fill = white] 
(5,2) circle (2pt) 
(4,2) circle (2pt) 
(3,2) circle (2pt) 
(2,2) circle (2pt) 
(1,2) circle (2pt) ;
\draw[->](4+0.07,2)--(5-0.07,2);
\draw[->](3+0.07,2)--(4-0.07,2);
\draw[->](2+0.07,2)--(3-0.07,2);
\draw[->](1+0.07,2)--(2-0.07,2);

\fill
(5,1) circle (2pt) ;
\node at (5,1) [above right, inner sep=1] {\scriptsize $+$};
\filldraw [fill = white] 
(6,1) circle (2pt) 
(4,1) circle (2pt) ;
\draw[->](5+0.07,1)--(6-0.07,1);
\draw[->](4+0.07,1)--(5-0.07,1);

\filldraw [fill = white] 
(9,0) circle (2pt) 
(8,0) circle (2pt) 
(7,0) circle (2pt) 
(6,0) circle (2pt) 
(5,0) circle (2pt) ;
\draw[->](8+0.07,0)--(9-0.07,0);
\draw[->](7+0.07,0)--(8-0.07,0);
\draw[->](6+0.07,0)--(7-0.07,0);
\draw[->](5+0.07,0)--(6-0.07,0);

\draw[->](6-0.05,1+0.05)--(5+0.05,2-0.05);
\draw[->](5-0.05,1+0.05)--(4+0.05,2-0.05);
\draw[->](4-0.05,1+0.05)--(3+0.05,2-0.05);

\draw[->](7-0.05,0+0.05)--(6+0.05,1-0.05);
\draw[->](6-0.05,0+0.05)--(5+0.05,1-0.05);
\draw[->](5-0.05,0+0.05)--(4+0.05,1-0.05);
\end{tikzpicture}
\end{center}
Then $\EE(\hat\LL_\rho)$ is as follows:
\begin{center}
\begin{tikzpicture}
\draw[help lines, step=1,dashed] (1,-2) grid (9,4);
\node at (1,4) [above] {$-4$};
\node at (2,4) [above] {$-3$};
\node at (3,4) [above] {$-2$};
\node at (4,4) [above] {$-1$};
\node at (5,4) [above] {$0$};
\node at (6,4) [above] {$1$};
\node at (7,4) [above] {$2$};
\node at (8,4) [above] {$3$};
\node at (9,4) [above] {$4$};

\filldraw [fill = white] 
(5,2) circle (2pt) 
(4,2) circle (2pt) 
(3,2) circle (2pt) 
(2,3) circle (2pt) 
(1,4) circle (2pt) ;
\draw[->](4+0.07,2)--(5-0.07,2);
\draw[->](3+0.07,2)--(4-0.07,2);

\fill
(5,1) circle (2pt) ;
\node at (5,1) [above right, inner sep=1] {\scriptsize $+$};
\filldraw [fill = white] 
(6,1) circle (2pt) 
(4,1) circle (2pt) ;
\draw[->](5+0.07,1)--(6-0.07,1);
\draw[->](4+0.07,1)--(5-0.07,1);

\filldraw [fill = white] 
(9,-2) circle (2pt) 
(8,-1) circle (2pt) 
(7,0) circle (2pt) 
(6,0) circle (2pt) 
(5,0) circle (2pt) ;
\draw[->](6+0.07,0)--(7-0.07,0);
\draw[->](5+0.07,0)--(6-0.07,0);

\draw[->](6-0.05,1+0.05)--(5+0.05,2-0.05);
\draw[->](5-0.05,1+0.05)--(4+0.05,2-0.05);
\draw[->](4-0.05,1+0.05)--(3+0.05,2-0.05);

\draw[->](7-0.05,0+0.05)--(6+0.05,1-0.05);
\draw[->](6-0.05,0+0.05)--(5+0.05,1-0.05);
\draw[->](5-0.05,0+0.05)--(4+0.05,1-0.05);

\draw[->](9-0.05,-2+0.05)--(8+0.05,-1-0.05);
\draw[->](8-0.05,-1+0.05)--(7+0.05,0-0.05);

\draw[->](3-0.05,2+0.05)--(2+0.05,3-0.05);
\draw[->](2-0.05,3+0.05)--(1+0.05,4-0.05);

\end{tikzpicture}
\end{center}
In particular, if $X_{\rho'} = \emptyset$ for any $\rho' \not\cong \rho$, 
then 
\begin{align*}
\pi(\LL) &= L(\Delta_\rho[0,-4]; \pi(\rho \boxtimes S_3, +)), \\
\pi(\hat\LL) &= L(\rho|\cdot|^{-4}, \rho|\cdot|^{-3}, \Delta_\rho[0,-2]; \pi(\rho \boxtimes S_3, +)).
\end{align*}
One can check that $\hat\pi(\LL) \cong \pi(\hat\LL)$ by \cite[Algorithm 4.1]{AM}. 
\end{ex}

\section{Determinantal formula}\label{sec:det}
In this section, we describe ladder representations of $G_n$ 
in terms of linear combinations of standard modules in $\Rr^G$.

\subsection{Statement}
Let $\LL = (\ub{X}, \ub{l}, \ub{\eta})$ be a ladder datum of $G_n$. 
For $\rho \in \Cusp^\GL$ with $\rho^\vee \cong \rho$, 
we write $X_\rho = \{\rho|\cdot|^{x_1},\dots,\rho|\cdot|^{x_{t_\rho}}\}$ 
with $x_1<\dots<x_{t_\rho}$.
For a non-negative integer $t$, we denote by $\Sy_t$
the symmetric group of order $t!$. 
In particular, $\Sy_0 = \Sy_1$ is the trivial group.

\begin{defi}\label{def:sgn}
\begin{enumerate}
\item
Let $\Sy(\LL)$ be the subset of the direct sum of symmetric groups $\oplus_{\rho} \Sy_{t_\rho}$
consisting of $\sigma = (\sigma_\rho)_\rho$ such that
\begin{itemize}
\item
for $1 \leq i < j \leq l_\rho$, we have $\sigma_\rho(i) < \sigma_\rho(j)$; 
\item
for $l_\rho+1 \leq i < j \leq t_\rho-l_\rho$, we have $\sigma_\rho(i) < \sigma_\rho(j)$; 
\item
if $x_i \leq -1$, or if $x_i = -1/2$ and $\eta_\rho = -1$, 
then $\sigma_\rho^{-1}(i) \leq l_\rho$. 
\end{itemize}

\item
The restriction of the product of the sign characters 
\[
\bigoplus_{\rho} \Sy_{t_\rho} \xrightarrow{\oplus \sgn} \bigoplus_\rho \{\pm1\} 
\xrightarrow{\text{product}} \{\pm1\}
\]
gives a sign map
\[
\sgn \colon \Sy(\LL) \rightarrow \{\pm1\}.
\]

\item
For $\sigma = (\sigma_\rho)_\rho \in \Sy(\LL)$, 
set 
\begin{align*}
J_{\sigma,\rho}^+ &\coloneqq \{1 \leq j \leq l_\rho \;|\; \sigma_\rho(j) < \sigma_\rho(t_\rho-j+1)\}, \\
J_{\sigma,\rho}^- &\coloneqq \{1 \leq j \leq l_\rho \;|\; \sigma_\rho(j) > \sigma_\rho(t_\rho-j+1)\}.
\end{align*}
We define a direct sum of standard modules $I_\sigma(\LL)$ by 
\[
I_\sigma(\LL) \coloneqq 
\left(
\bigtimes_\rho \bigtimes_{j \in J_{\sigma,\rho}^+} 
\Delta_\rho[x_{\sigma_\rho(j)}, -x_{\sigma_\rho(t_\rho-j+1)}] 
\right)
\rtimes \left(\bigoplus_{\delta}\pi(\phi_\sigma, \ep_{\sigma, \delta})\right), 
\]
where $\delta = (\delta_\rho)_\rho$ runs over the set of tuples of maps 
\[
\delta_\rho \colon J_{\sigma,\rho}^- \rightarrow \{\pm1\},
\]
and $(\phi_\sigma, \ep_{\sigma,\delta})$ is given by
\begin{align*}
\phi_\sigma \coloneqq &\bigoplus_\rho
\bigoplus_{i=l_\rho+1}^{t_\rho-l_\rho} \rho \boxtimes S_{2x_{\sigma_\rho(i)}+1}
\\&\oplus
\bigoplus_\rho\bigoplus_{j \in J_{\sigma,\rho}^-}
\rho \boxtimes (S_{2x_{\sigma_\rho(j)}+1} \oplus S_{2x_{\sigma_\rho(t_\rho-j+1)}+1})
\end{align*}
and 
\begin{align*}
\ep_{\sigma,\delta}(\rho \boxtimes S_{2x_{\sigma_\rho(i)+1}}) 
&\coloneqq (-1)^{i-l_\rho-1}\eta_\rho, \\
\ep_{\sigma,\delta}(\rho \boxtimes S_{2x_{\sigma_\rho(j)}+1}) 
&=
\ep_{\sigma,\delta}(\rho \boxtimes S_{2x_{\sigma_\rho(t_\rho-j+1)}+1}) 
\coloneqq \delta_\rho(j)
\end{align*}
for $l_\rho+1 \leq i \leq t_\rho-l_\rho$ and $j \in J_{\sigma,\rho}^-$.
Here, if $\phi_\sigma \supset \rho \boxtimes S_0$ and $\ep_{\sigma,\delta}(\rho \boxtimes S_0) = -1$, 
then we interpret $\pi(\phi_\sigma, \ep_{\sigma,\delta})$ to be $0$.
\end{enumerate}
\end{defi}

For a ladder datum $\LL$ for $G_n$, we have a ladder representation $\pi(\LL) \in \Irr(G_n)$. 
Recall in \S \ref{rep}, 
for a cuspidal support $\s$ of $G_n$, 
we have a projection operator $p_\s \colon \Rr(G_n) \rightarrow \Rr(G_n)$. 
If $\Pi$ is a standard module of $G_n$, 
since all irreducible subquotients of $\Pi$ have the same cuspidal support, 
one has $p_\s([\Pi]) = [\Pi]$ or $p_\s([\Pi]) = 0$. 
By \cite[Theorem 4.2]{At}, one can determine $p_\s([\Pi])$. 
\par

Now we can state our main result, 
which is an analogue of Tadi\'c's determinantal formula \cite[Theorem 1 (ii)]{LM}.

\begin{thm}\label{determinantal}
Let $\LL$ be a ladder datum for $G_n$. 
Then 
\[
[\pi(\LL)] = p_\s \left(\sum_{\sigma \in \Sy(\LL)} \sgn(\sigma) [I_\sigma(\LL)] \right), 
\]
where $\s = \supp(\pi(\LL))$.
\end{thm}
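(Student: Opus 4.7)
The plan is to prove the identity by strong induction on the rank $n$ of $G_n$, reducing via derivatives and using the combinatorial machinery of the graph $\EE(\LL_\rho)$. The base case is when $\pi(\LL)$ is supercuspidal: by Corollary \ref{sc} each $f_\rho$ is nowhere zero, and the constraints defining $\Sy(\LL)$ in Definition \ref{def:sgn} then force $\Sy(\LL) = \{\id\}$ with $I_\id(\LL) = \pi(\LL)$, so the formula is tautological.

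For the inductive step, I would choose $\rho \in \Cusp^\GL$ with $\rho^\vee \cong \rho$ and a real number $x$ such that $\EE(\LL_\rho)$ has a minimal vertex of the form $(x, y_0)$ with $f_\rho(x, y_0) = 0$. By Proposition \ref{der} the derivative $D_{\rho|\cdot|^x}(\pi(\LL)) = \pi(\LL')$ is a ladder representation attached to the smaller datum $\LL'$ obtained by excising $(x, y_0)$ and its partner from $\VV(\LL_\rho)$, so the inductive hypothesis applies to $\LL'$. Applying $D_{\rho|\cdot|^x}$ to both sides of the claimed identity and using the commutation $D_{\rho|\cdot|^x} \circ p_\s = p_{\s'} \circ D_{\rho|\cdot|^x}$ recorded in \S\ref{rep} (with $\s' = \supp(\pi(\LL'))$), the problem reduces to verifying the identity in $\Rr^G$
\[
\sum_{\sigma \in \Sy(\LL)} \sgn(\sigma)\, D_{\rho|\cdot|^x}\bigl([I_\sigma(\LL)]\bigr)
\equiv
\sum_{\sigma' \in \Sy(\LL')} \sgn(\sigma')\, [I_{\sigma'}(\LL')]
\pmod{\ker p_{\s'}}.
\]
Each summand on the left can be expanded by Tadi\'c's formula (Proposition \ref{Tadic}) together with the tempered-derivative rules used in \cite{AM}, producing an explicit sum of standard modules.

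The heart of the argument is then a purely combinatorial matching: I would construct a sign-preserving correspondence between pairs $(\sigma, \text{extraction data})$ for $\LL$ and permutations $\sigma' \in \Sy(\LL')$, showing that, once contributions killed by $p_{\s'}$ are discarded, the alternating sums on the two sides agree. The mechanism mirrors the GL analogue of \cite{LM}, but is complicated by the segment/tempered dichotomy on the classical side: the extracted $\rho|\cdot|^x$ can come either from the Steinberg factor $\Delta_\rho[x_{\sigma_\rho(j)}, -x_{\sigma_\rho(t_\rho-j+1)}]$ for $j \in J_{\sigma,\rho}^+$ or from the tempered block $\pi(\phi_\sigma, \ep_{\sigma,\delta})$, and one must monitor how $l_\rho$ and the sets $J_{\sigma,\rho}^\pm$ evolve under the rule $l'_\rho = l_\rho - \#\{j : y_j + y_{t-j+1} = -1\}$ appearing in Theorem \ref{jacquet}. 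Once this match is established, the inductive hypothesis gives $D_{\rho|\cdot|^x}$ of (LHS $-$ RHS) $= 0$, and iterating over all $(x,\rho)$ for which $D_{\rho|\cdot|^x}(\pi(\LL)) \neq 0$ shows that the difference lies in the span of supercuspidals of cuspidal support $\s$, hence vanishes by Corollary \ref{sc} applied to both $\pi(\LL)$ and the unique supercuspidal in $\s$.

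The main obstacle will be the combinatorial matching itself, namely tracking how $\sgn(\sigma)$ and the three constraints in Definition \ref{def:sgn} behave under removal of a single minimal vertex of $\EE(\LL_\rho)$. The delicate cases are those where the extraction flips a row index $j$ between $J_{\sigma,\rho}^+$ and $J_{\sigma,\rho}^-$ (turning a Steinberg factor into an extra summand in $\phi_\sigma$), decreases $l_\rho$, or interacts with the sign condition $(-1)^{[t/2]+l}\eta^t = 1$; each scenario must be compensated by a matched pair of $\sigma$'s differing by a transposition, whose contributions cancel outside $p_{\s'}$-image. Getting these signs and multiplicities right, uniformly across the product over $\rho$ and across the various strata of $\VV(\LL_\rho)$ near the central axis, is where the real work lies.
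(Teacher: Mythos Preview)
Your inductive skeleton is correct and matches the paper, but the logic of the inductive step has a genuine gap. You only control $D_{\rho|\cdot|^x}$ of the difference for those $(\rho,x)$ with $D_{\rho|\cdot|^x}(\pi(\LL)) \neq 0$, and from this alone you cannot conclude that the difference is supercuspidal: a virtual representation with cuspidal support $\s$ can very well be killed by these particular derivatives while having a nonzero $D_{\rho|\cdot|^{x'}}$ for some other $x'$. The paper closes this gap with a separate lemma (Lemma~\ref{lem1}): whenever $D_{\rho|\cdot|^\lambda}(\pi(\LL))=0$, one also has $D_{\rho|\cdot|^\lambda}\circ p_\s\bigl(\sum_\sigma \sgn(\sigma)[I_\sigma(\LL)]\bigr)=0$. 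This is where the real combinatorial work sits, and it is exactly the sign-reversing involution you describe---pairs of $\sigma$'s differing by a transposition with cancelling derivatives---but applied to the \emph{vanishing} case, not the nonvanishing one. Without this lemma you cannot pass from ``some derivatives agree'' to ``$\Jac_P$ agrees'' and the induction does not close.

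Conversely, the nonvanishing case (your main focus) is much simpler than you anticipate. When $D_{\rho|\cdot|^\lambda}(\pi(\LL))\neq 0$, Proposition~\ref{der} forces $\lambda\in\{x_1,\dots,x_t\}$ and $\lambda-1\notin\{x_1,\dots,x_t\}$. Hence applying $D_{\rho|\cdot|^\lambda}$ to each $I_\sigma(\LL)$ simply shifts the single entry $x_j=\lambda$ down to $\lambda-1$; one gets $\Sy(\LL')=\Sy(\LL)$ on the nose and $D_{\rho|\cdot|^\lambda}([I_\sigma(\LL)])=[I_\sigma(\LL')]$ term by term. There is no need for extraction data, no change in $l_\rho$, no flipping between $J^+_{\sigma,\rho}$ and $J^-_{\sigma,\rho}$, and no nontrivial matching. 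So you have inverted the difficulty: the involution machinery you sketch belongs to the case $D_{\rho|\cdot|^\lambda}(\pi(\LL))=0$ (where $x_{j-1}=x_j-1$ and two adjacent ladder rungs interact), and that case is indispensable for the argument.
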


\subsection{Examples}
In this subsection, except for Example \ref{counter2} below, 
we assume that $\LL = (\ub{X},\ub{l},\ub{\eta})$ satisfies that 
$X_\rho = \emptyset$ unless $\rho = \1_{\GL_1(F)}$. 
We identify $\LL$ with $\LL_{\1_{\GL_1(F)}}$ and drop $\1_{\GL_1(F)}$ from the subscript. 
For example, $[x,y] = \{|\cdot|^x,|\cdot|^{x-1},\dots,|\cdot|^y\}$ is a segment, 
and $\Delta[x,y]$ is the associated Steinberg representation.
\par

Once $\rho$ is fixed, 
for $\phi = \oplus_{i=1}^r \rho \boxtimes S_{2x_i+1}$ 
and $\epsilon_i = \ep(\rho \boxtimes S_{2x_i+1})$, 
we write
\[
\pi(x_1^{\epsilon_1},\dots,x_r^{\epsilon_r}) \coloneqq \pi(\phi,\ep).
\]

\begin{ex}\label{ex1}
Let us consider the trivial representation $\1_{\SO_5(F)}$ of $\SO_5(F)$.
Note that $\1_{\SO_5(F)} = \pi(\LL)$ with $\LL \coloneqq ([3/2,-3/2],2,-1)$.
By Definition \ref{def:sgn} (1), we have
\[
\Sy(\LL) = \{\sigma \in \Sy_4 \;|\; \sigma(1) = 1\}.
\]
We give the list of $\sigma \in \Sy(\LL)$, $\sgn(\sigma)$ and $I_\sigma(\LL)$
in Table \ref{tab1}. 
Here, we recall that $\Delta[-\half{3},-\half{1}] = \1_{\GL_0(F)}$, $\Delta[-\half{3},\half{1}] = 0$
and $\pi((-\half{1})^-,(\half{1})^-) = \pi((-\half{1})^-,(\half{3})^-) = 0$. 

{\renewcommand\arraystretch{1.3}
\begin{longtable}[c]{ccc}
\caption{List of $\sigma$, $\sgn(\sigma)$, $I_\sigma(\LL)$ in Example \ref{ex1}.}
\label{tab1}
\\
\hline
$\sigma$ & $\sgn(\sigma)$ & $I_\sigma(\LL)$ \\
\hline
$\left(\begin{smallmatrix}
1&2&3&4\\
1&2&3&4
\end{smallmatrix}\right)$
& $1$ &
$|\cdot|^{-\half{3}} \times |\cdot|^{-\half{1}} \rtimes \1_{\SO_1(F)}$\\
$\left(\begin{smallmatrix}
1&2&3&4\\
1&2&4&3
\end{smallmatrix}\right)$
& $-1$ &
$\Delta[-\half{1},-\half{3}] \rtimes \1_{\SO_1(F)}$\\
$\left(\begin{smallmatrix}
1&2&3&4\\
1&3&2&4
\end{smallmatrix}\right)$
& $-1$ &
$|\cdot|^{-\half{3}} \rtimes \pi((\half{1})^+)$\\
$\left(\begin{smallmatrix}
1&2&3&4\\
1&3&4&2
\end{smallmatrix}\right)$
& $1$ & $0$\\
$\left(\begin{smallmatrix}
1&2&3&4\\
1&4&2&3
\end{smallmatrix}\right)$
& $1$ &
$\pi((\half{3})^+)$\\
$\left(\begin{smallmatrix}
1&2&3&4\\
1&4&3&2
\end{smallmatrix}\right)$
& $-1$ & $0$
\end{longtable}
}

Then Theorem \ref{determinantal} says that 
\begin{talign*}
\Big[\1_{\SO_5(F)}\Big] &=
\Big[|\cdot|^{-\half{3}} \times |\cdot|^{-\half{1}} \rtimes \1_{\SO_1(F)}\Big]
-\Big[\Delta[-\half{1},-\half{3}] \rtimes \1_{\SO_1(F)}\Big]
\\&\quad
-\Big[|\cdot|^{-\half{3}} \rtimes \pi((\half{1})^+)\Big]
+\Big[\pi((\half{3})^+)\Big]. 
\end{talign*}
\end{ex}

\begin{ex}\label{ex2}
Let us consider $\LL \coloneqq ([3,-1],2,+1)$. 
Then 
\[
\pi(\LL) = L(\Delta[-1,-3], \Delta[0,-2]; \pi(1^+))
\]
is an irreducible representation of $\Sp_{14}(F)$.
By Definition \ref{def:sgn} (1), we have
\[
\Sy(\LL) = \{\sigma \in \Sy_5 \;|\; \sigma(1) = 1\}.
\]
The list of $\sigma \in \Sy(\LL)$, $\sgn(\sigma)$ and $I_\sigma(\LL)$ is given in Table \ref{tab2}.
Here, we recall that $\Delta[-1,0] = \1_{\GL_0(F)}$.

{\renewcommand\arraystretch{1.3}
\begin{longtable}[c]{ccc}
\caption{List of $\sigma$, $\sgn(\sigma)$, $I_\sigma(\LL)$ in Example \ref{ex2}.}
\label{tab2}
\\
\hline
$\sigma$ & $\sgn(\sigma)$ & $I_\sigma(\LL)$ \\
\hline
$\left(\begin{smallmatrix}
1&2&3&4&5\\
1&2&3&4&5
\end{smallmatrix}\right)$
& $1$ &
$\Delta[-1,-3] \times \Delta[0,-2] \rtimes \pi(1^+)$\\
$\left(\begin{smallmatrix}
1&2&3&4&5\\
1&2&3&5&4
\end{smallmatrix}\right)$
& $-1$ &
$\Delta[-1,-2] \times \Delta[0,-3] \rtimes \pi(1^+)$\\
$\left(\begin{smallmatrix}
1&2&3&4&5\\
1&2&4&3&5
\end{smallmatrix}\right)$
& $-1$ &
$\Delta[-1,-3] \times \Delta[0,-1] \rtimes \pi(2^+)$\\
$\left(\begin{smallmatrix}
1&2&3&4&5\\
1&2&4&5&3
\end{smallmatrix}\right)$
& $1$ &
$|\cdot|^{-1} \times \Delta[0,-3] \rtimes \pi(2^+)$\\
$\left(\begin{smallmatrix}
1&2&3&4&5\\
1&2&5&3&4
\end{smallmatrix}\right)$
& $1$ &
$\Delta[-1,-2] \times \Delta[0,-1] \rtimes \pi(3^+)$\\
$\left(\begin{smallmatrix}
1&2&3&4&5\\
1&2&5&4&3
\end{smallmatrix}\right)$
& $-1$ &
$|\cdot|^{-1} \times \Delta[0,-2] \rtimes \pi(3^+)$\\
$\left(\begin{smallmatrix}
1&2&3&4&5\\
1&3&2&4&5
\end{smallmatrix}\right)$
& $-1$ &
$\Delta[-1,-3] \times \Delta[1,-2] \rtimes \pi(0^+)$\\
$\left(\begin{smallmatrix}
1&2&3&4&5\\
1&3&2&5&4
\end{smallmatrix}\right)$
& $1$ &
$\Delta[-1,-2] \times \Delta[1,-3] \rtimes \pi(0^+)$\\
$\left(\begin{smallmatrix}
1&2&3&4&5\\
1&3&4&2&5
\end{smallmatrix}\right)$
& $1$ &
$\Delta[-1,-3] \rtimes (\pi(0^+,1^+,2^+) \oplus \pi(0^-,1^-,2^+))$\\
$\left(\begin{smallmatrix}
1&2&3&4&5\\
1&3&4&5&2
\end{smallmatrix}\right)$
& $-1$ &
$\Delta[1,-3] \rtimes \pi(2^+)$\\
$\left(\begin{smallmatrix}
1&2&3&4&5\\
1&3&5&2&4
\end{smallmatrix}\right)$
& $-1$ &
$\Delta[-1,-2] \rtimes (\pi(0^+,1^+,3^+)\oplus\pi(0^-,1^-,3^+))$\\
$\left(\begin{smallmatrix}
1&2&3&4&5\\
1&3&5&4&2
\end{smallmatrix}\right)$
& $1$ &
$\Delta[1,-2] \rtimes \pi(3^+)$\\
$\left(\begin{smallmatrix}
1&2&3&4&5\\
1&4&2&3&5
\end{smallmatrix}\right)$
& $1$ &
$\Delta[-1,-3] \rtimes (\pi(0^+,1^+,2^+) \oplus \pi(0^+,1^-,2^-))$\\
$\left(\begin{smallmatrix}
1&2&3&4&5\\
1&4&2&5&3
\end{smallmatrix}\right)$
& $-1$ &
$|\cdot|^{-1} \times \Delta[2,-3] \rtimes \pi(0^+)$\\
$\left(\begin{smallmatrix}
1&2&3&4&5\\
1&4&3&2&5
\end{smallmatrix}\right)$
& $-1$ &
$\Delta[-1,-3] \rtimes (\pi(0^+,1^+,2^+) \oplus \pi(0^-,1^+,2^-))$\\
$\left(\begin{smallmatrix}
1&2&3&4&5\\
1&4&3&5&2
\end{smallmatrix}\right)$
& $1$ &
$\Delta[2,-3] \rtimes \pi(1^+)$\\
$\left(\begin{smallmatrix}
1&2&3&4&5\\
1&4&5&2&3
\end{smallmatrix}\right)$
& $1$ &
$|\cdot|^{-1} \rtimes (\pi(0^+,2^+,3^+) \oplus \pi(0^-,2^-,3^+))$\\
$\left(\begin{smallmatrix}
1&2&3&4&5\\
1&4&5&3&2
\end{smallmatrix}\right)$
& $-1$ &
$\pi(1^+,2^+,3^+) \oplus \pi(1^-,2^-,3^+)$\\
$\left(\begin{smallmatrix}
1&2&3&4&5\\
1&5&2&3&4
\end{smallmatrix}\right)$
& $-1$ &
$\Delta[-1,-2] \rtimes (\pi(0^+,1^+,3^+) \oplus \pi(0^+,1^-,3^-))$\\
$\left(\begin{smallmatrix}
1&2&3&4&5\\
1&5&2&4&3
\end{smallmatrix}\right)$
& $1$ &
$|\cdot|^{-1} \rtimes (\pi(0^+,2^+,3^+) \oplus \pi(0^+,2^-,3^-))$\\
$\left(\begin{smallmatrix}
1&2&3&4&5\\
1&5&3&2&4
\end{smallmatrix}\right)$
& $1$ &
$\Delta[-1,-2] \rtimes (\pi(0^+,1^+,3^+) \oplus \pi(0^-,1^+,3^-))$\\
$\left(\begin{smallmatrix}
1&2&3&4&5\\
1&5&3&4&2
\end{smallmatrix}\right)$
& $-1$ &
$\pi(1^+,2^+,3^+) \oplus \pi(1^+,2^-,3^-)$\\
$\left(\begin{smallmatrix}
1&2&3&4&5\\
1&5&4&2&3
\end{smallmatrix}\right)$
& $-1$ &
$|\cdot|^{-1} \rtimes (\pi(0^+,2^+,3^+) \oplus \pi(0^-,2^+,3^-))$\\
$\left(\begin{smallmatrix}
1&2&3&4&5\\
1&5&4&3&2
\end{smallmatrix}\right)$
& $1$ &
$\pi(1^+,2^+,3^+) \oplus \pi(1^-,2^+,3^-)$
\end{longtable}
}

Among the standard modules appearing in this list, 
exactly $4$ representations
\begin{align*}
\Delta[-1,-3] \rtimes \pi(0^-,1^+,2^-), 
&\quad
\Delta[-1,-2] \rtimes \pi(0^-,1^+,3^-), 
\\
|\cdot|^{-1} \rtimes \pi(0^-,2^+,3^-), 
&\quad
\pi(1^-,2^+,3^-)
\end{align*}
are killed by $p_\s([\cdot])$ with $\s = \supp(\pi(\LL))$.
In addition, the $4$ representations
\begin{align*}
\Delta[-1,-3] \rtimes \pi(0^+,1^+,2^+), 
&\quad
\Delta[-1,-2] \rtimes \pi(0^+,1^+,3^+), 
\\
|\cdot|^{-1} \rtimes \pi(0^+,2^+,3^+), 
&\quad
\pi(1^+,2^+,3^+)
\end{align*}
appear three times, 
but two of them are cancelled, respectively. 
Therefore, by Theorem \ref{determinantal}, 
we conclude that $[\pi(\LL)]$ is a linear combination of 
exactly $24$ standard modules with coefficients in $\{\pm1\}$. 
\end{ex}

\begin{ex}\label{ex3}
Let us consider $\LL \coloneqq ([4,0],1,-1)$.
Then 
\[
\pi(\LL) = L(\Delta[0,-4]; \pi(1^-,2^+,3^-))
\]
is an irreducible representation of $\Sp_{24}(F)$.
Any element $\sigma \in \Sy(\LL) \subset \Sy_5$ 
is determined by the pair $(\sigma(1),\sigma(5))$
since $\sigma(2) < \sigma(3) < \sigma(4)$.
Hence $|\Sy(\LL)| = 20$. 
We list $\sigma \in \Sy(\LL)$, $\sgn(\sigma)$ and $I_\sigma(\LL)$ in Table \ref{tab3}.

{\renewcommand\arraystretch{1.3}
\begin{longtable}[c]{ccc}
\caption{List of $\sigma$, $\sgn(\sigma)$, $I_\sigma(\LL)$ in Example \ref{ex3}.}
\label{tab3}
\\
\hline
$\sigma$ & $\sgn(\sigma)$ & $I_\sigma(\LL)$ \\
\hline
$\left(\begin{smallmatrix}
1&2&3&4&5\\
1&2&3&4&5
\end{smallmatrix}\right)$
& $1$ &
$\Delta[0,-4] \rtimes \pi(1^-,2^+,3^-)$\\
$\left(\begin{smallmatrix}
1&2&3&4&5\\
1&2&3&5&4
\end{smallmatrix}\right)$
& $-1$ &
$\Delta[0,-3] \rtimes \pi(1^-,2^+,4^-)$\\
$\left(\begin{smallmatrix}
1&2&3&4&5\\
1&2&4&5&3
\end{smallmatrix}\right)$
& $1$ &
$\Delta[0,-2] \rtimes \pi(1^-,3^+,4^-)$\\
$\left(\begin{smallmatrix}
1&2&3&4&5\\
1&3&4&5&2
\end{smallmatrix}\right)$
& $-1$ &
$\Delta[0,-1] \rtimes \pi(2^-,3^+,4^-)$\\
$\left(\begin{smallmatrix}
1&2&3&4&5\\
2&1&3&4&5
\end{smallmatrix}\right)$
& $-1$ &
$\Delta[1,-4] \rtimes \pi(0^-,2^+,3^-)$\\
$\left(\begin{smallmatrix}
1&2&3&4&5\\
2&1&3&5&4
\end{smallmatrix}\right)$
& $1$ &
$\Delta[1,-3] \rtimes \pi(0^-,2^+,4^-)$\\
$\left(\begin{smallmatrix}
1&2&3&4&5\\
2&1&4&5&3
\end{smallmatrix}\right)$
& $-1$ &
$\Delta[1,-2] \rtimes \pi(0^-,3^+,4^-)$\\
$\left(\begin{smallmatrix}
1&2&3&4&5\\
2&3&4&5&1
\end{smallmatrix}\right)$
& $1$ &
$\pi(0^+,1^+,2^-,3^+,4^-) \oplus \pi(0^-,1^-,2^-,3^+,4^-)$\\
$\left(\begin{smallmatrix}
1&2&3&4&5\\
3&1&2&4&5
\end{smallmatrix}\right)$
& $1$ &
$\Delta[2,-4] \rtimes \pi(0^-,1^+,3^-)$\\
$\left(\begin{smallmatrix}
1&2&3&4&5\\
3&1&2&5&4
\end{smallmatrix}\right)$
& $-1$ &
$\Delta[2,-3] \rtimes \pi(0^-,1^+,4^-)$\\
$\left(\begin{smallmatrix}
1&2&3&4&5\\
3&1&4&5&2
\end{smallmatrix}\right)$
& $1$ &
$\pi(0^-,1^+,2^+,3^+,4^-) \oplus \pi(0^-,1^-,2^-,3^+,4^-)$\\
$\left(\begin{smallmatrix}
1&2&3&4&5\\
3&2&4&5&1
\end{smallmatrix}\right)$
& $-1$ &
$\pi(0^+,1^-,2^+,3^+,4^-) \oplus \pi(0^-,1^-,2^-,3^+,4^-)$\\
$\left(\begin{smallmatrix}
1&2&3&4&5\\
4&1&2&3&5
\end{smallmatrix}\right)$
& $-1$ &
$\Delta[3,-4] \rtimes \pi(0^-,1^+,2^-)$\\
$\left(\begin{smallmatrix}
1&2&3&4&5\\
4&1&2&5&3
\end{smallmatrix}\right)$
& $1$ &
$\pi(0^-,1^+,2^+,3^+,4^-) \oplus \pi(0^-,1^+,2^-,3^-,4^-)$\\
$\left(\begin{smallmatrix}
1&2&3&4&5\\
4&1&3&5&2
\end{smallmatrix}\right)$
& $-1$ &
$\pi(0^-,1^+,2^+,3^+,4^-) \oplus \pi(0^-,1^-,2^+,3^-,4^-)$\\
$\left(\begin{smallmatrix}
1&2&3&4&5\\
4&2&3&5&1
\end{smallmatrix}\right)$
& $1$ &
$\pi(0^+,1^-,2^+,3^+,4^-) \oplus \pi(0^-,1^-,2^+,3^-,4^-)$\\
$\left(\begin{smallmatrix}
1&2&3&4&5\\
5&1&2&3&4
\end{smallmatrix}\right)$
& $1$ &
$\pi(0^-,1^+,2^-,3^+,4^+) \oplus \pi(0^-,1^+,2^-,3^-,4^-)$\\
$\left(\begin{smallmatrix}
1&2&3&4&5\\
5&1&2&4&3
\end{smallmatrix}\right)$
& $-1$ &
$\pi(0^-,1^+,2^+,3^-,4^+) \oplus \pi(0^-,1^+,2^-,3^-,4^-)$\\
$\left(\begin{smallmatrix}
1&2&3&4&5\\
5&1&3&4&2
\end{smallmatrix}\right)$
& $1$ &
$\pi(0^-,1^+,2^+,3^-,4^+) \oplus \pi(0^-,1^-,2^+,3^-,4^-)$\\
$\left(\begin{smallmatrix}
1&2&3&4&5\\
5&2&3&4&1
\end{smallmatrix}\right)$
& $-1$ &
$\pi(0^+,1^-,2^+,3^-,4^+) \oplus \pi(0^-,1^-,2^+,3^-,4^-)$
\end{longtable}
}

If we put $\s = \supp(\pi(\LL))$, we note that 
\begin{align*}
&p_\s([\pi(0^+,1^-,2^+,3^+,4^-) \oplus \pi(0^-,1^-,2^+,3^-,4^-)]) = 0, \\
&p_\s([\pi(0^-,1^+,2^+,3^-,4^+) \oplus \pi(0^-,1^-,2^+,3^-,4^-)]) = 0, \\
&p_\s([\pi(0^+,1^-,2^+,3^-,4^+) \oplus \pi(0^-,1^-,2^+,3^-,4^-)]) = 0, 
\end{align*}
and
\begin{align*}
&p_\s([\pi(0^-,1^+,2^+,3^+,4^-) \oplus \pi(0^-,1^-,2^-,3^+,4^-)])
\\&-p_\s([\pi(0^+,1^-,2^+,3^+,4^-) \oplus \pi(0^-,1^-,2^-,3^+,4^-)])
\\&+p_\s([\pi(0^-,1^+,2^+,3^+,4^-) \oplus \pi(0^-,1^+,2^-,3^-,4^-)])
\\&-p_\s([\pi(0^-,1^+,2^+,3^-,4^+) \oplus \pi(0^-,1^+,2^-,3^-,4^-)])
\\&-p_\s([\pi(0^-,1^+,2^+,3^+,4^-) \oplus \pi(0^-,1^-,2^+,3^-,4^-)])
\\&= [\pi(0^-,1^+,2^+,3^+,4^-)].
\end{align*}
Here, in the left hand side of the last equation, 
we note that $[\pi(0^-,1^+,2^+,3^+,4^-)]$ appears three times. 
In conclusion, Theorem \ref{determinantal} tells us that 
$[\pi(\LL)]$ is an alternating sum of exactly $15$ standard modules. 
\end{ex}

\begin{rem}
As we have seen these examples, 
one might expect that $[\pi(\LL)]$ would be a linear combination of 
standard modules with coefficients in $\{\pm1\}$. 
\end{rem}

As in Example \ref{counter},
there are irreducible representations $\pi$
which are not ladder but all Jacquet modules of $\pi$ are completely reducible. 
For such representations, our determinantal formula may fail. 

\begin{ex}\label{counter2}
In this example, we fix an irreducible cuspidal symplectic representation $\rho$ of $\GL_2(F)$. 
Consider 
\[
\pi \coloneqq L(\Delta_\rho[-1,-2]; \pi(0^+,1^+)) \in \Irr(\SO_{17}(F)). 
\]
One can compute all Jacquet modules of $\pi$ 
by the argument of \cite[Theorem 4.3]{At}
together with \cite[Section 6]{X1}, \cite[Theorem 4.2]{At} and \cite[Theorem 7.1]{AM}.
To be precise, if $P_{2k}$ is the standard parabolic subgroup of $\SO_{17}(F)$ 
with Levi subgroup isomorphic to $\GL_{2k}(F) \times \SO_{17-2k}(F)$, 
then 
\begin{align*}
\Jac_{P_2}(\pi) &= 
\Big(\rho|\cdot|^{-1} \otimes L(\rho|\cdot|^{-2}; \pi(0^+,1^+))\Big)
\\&\quad\oplus 
\Big(\rho|\cdot|^{1} \otimes L(\Delta_\rho[-1,-2]; \pi(0^+,0^+))\Big),
\\
\Jac_{P_4}(\pi) &= 
\Big(\Delta_\rho[-1,-2] \otimes \pi(0^+,1^+)\Big)
\\&\quad\oplus 
\Big(L(\rho|\cdot|^{-1}, \rho|\cdot|^{1}) \otimes L(\rho|\cdot|^{-2}; \pi(0^+,0^+))\Big)
\\&\quad\oplus 
\Big(L(\rho|\cdot|^{1}, \rho|\cdot|^2) \otimes L(\rho|\cdot|^{-1}; \pi(0^+,0^+))\Big),
\\
\Jac_{P_6}(\pi) &= 
\Big(L(\Delta_\rho[-1,-2], |\cdot|^1) \otimes \pi(0^+,0^+)\Big)
\\&\quad\oplus 
\Big(L(\rho|\cdot|^{-1}, \Delta_\rho[1,0]) \otimes L(\rho|\cdot|^{-2}; \1_{\SO_1(F)})\Big)
\\&\quad\oplus 
\Big(L( \rho|\cdot|^{-1}, \rho|\cdot|^{1}, \rho|\cdot|^2) \otimes \pi(0^+,0^+)\Big),
\\
\Jac_{P_8}(\pi) &= 
\Big(L(\Delta_\rho[-1,-2], \Delta_\rho[1,0]) \otimes \1_{\SO_1(F)}\Big)
\\&\quad\oplus 
\Big(L(\rho|\cdot|^{-1}, \Delta_\rho[1,0], \rho|\cdot|^2) \otimes \1_{\SO_1(F)}\Big).
\end{align*}
Since the representations of $\GL_{2k}(F)$ appearing above are all ladder, 
we see that 
$\Jac_P(\pi)$ is completely reducible for any parabolic subgroup $P$ of $\SO_{17}(F)$. 
\par

On the other hand, if we try to apply Theorem \ref{determinantal} to $\pi$ formally, 
the right hand side should be equal to 
\[
\tag{$\ast$}
\Big[\Delta_\rho[-1,-2] \rtimes \pi(0^+,1^+)\Big] 
- \Big[\rho|\cdot|^{-1} \rtimes \pi(0^+,2^+)\Big]
+ \Big[\pi(1^+,2^+)\Big].
\]
By taking $D_{\rho|\cdot|^2}$, it becomes $[\pi(1^+,1^+)]$, 
whereas $D_{\rho|\cdot|^2}([\pi]) = 0$ by \cite[Theorem 7.1]{AM}.
Hence $[\pi]$ is not equal to $(\ast)$.
\end{ex}

\subsection{Proof of Theorem \ref{determinantal}}
In this subsection, we prove Theorem \ref{determinantal}.
\par

Let $\LL = (\ub{X}, \ub{l}, \ub{\eta})$ be a ladder datum for $G_n$
and $\pi(\LL) \in \Irr(G_n)$ be the associated ladder representation.
For $\rho \in \Cusp(\GL_d(F))$ such that $\rho^\vee \cong \rho$, 
we write $X_\rho = \{\rho|\cdot|^{x_1},\dots,\rho|\cdot|^{x_{t_\rho}}\}$ with $x_1<\dots<x_{t_\rho}$.
We defined a directed $3$-colorable graph $\EE(\LL_\rho) = (\VV(\LL_\rho), \less, f_\rho)$
in Definition \ref{graph}.
Set $2m_\rho = |f_\rho^{-1}(0)|$, which is even. 
\par

We will prove Theorem \ref{determinantal} by induction on $n$. 
If $m_\rho=0$ for all $\rho$, then by Corollary \ref{sc} (1), $\pi(\LL)$ is supercuspidal.
In this case, 
since $|\Sy(\LL)| = 1$, Theorem \ref{determinantal} is trivial.
\par

From now, assume the assertion of Theorem \ref{determinantal} for $G_{n'}$ with $n' < n$.
Furthermore, we fix $\rho \in \Cusp(\GL_d(F))$ with $\rho^\vee \cong \rho$ such that $m_\rho > 0$. 
Write $t \coloneqq t_\rho$, $(l,\eta) \coloneqq (l_\rho, \eta_\rho)$ and $m \coloneqq m_\rho$
for simplicity.

\begin{lem}\label{lem1}
If $\lambda \in \R$ satisfies that $D_{\rho|\cdot|^\lambda}([\pi(\LL)]) = 0$, 
then 
\[
D_{\rho|\cdot|^{\lambda}} \circ p_\s\left(\sum_{\sigma \in \Sy(\LL)} \sgn(\sigma) [I_\sigma(\LL)] \right) = 0.
\]
\end{lem}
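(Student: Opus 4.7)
\textbf{Proof plan for Lemma \ref{lem1}.} The plan is to expand the $\rho|\cdot|^\lambda$-derivative of each $I_\sigma(\LL)$ via Tadi\'c's formula, throw away the terms with wrong cuspidal support using the commutation $D_{\rho|\cdot|^\lambda}\circ p_\s = p_{\s'}\circ D_{\rho|\cdot|^\lambda}$ (from \S\ref{rep}), and then construct a sign-reversing involution on the remaining terms that forces the sum to vanish. The hypothesis $D_{\rho|\cdot|^\lambda}(\pi(\LL))=0$ will be used only to exclude the unique ``unpaired'' contribution.

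First I would translate the hypothesis. By Proposition \ref{der}, $D_{\rho|\cdot|^\lambda}(\pi(\LL))=0$ is equivalent to the combinatorial statement that $\EE(\LL_\rho)$ possesses no minimal vertex of the form $(\lambda,y)$ with $f_\rho(\lambda,y)=0$. Writing $X_\rho=\{\rho|\cdot|^{x_1},\dots,\rho|\cdot|^{x_t}\}$, this means that the only candidate indices $i$ with $x_i=\lambda$ (or $-x_i-1=\lambda$) are already ``blocked'' by a colored vertex, so any putative removal at $\lambda$ will have a forced companion removal elsewhere.

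Next I would expand
\[
D_{\rho|\cdot|^\lambda}[I_\sigma(\LL)] = \sum_{k} [I_\sigma^{(k)}],
\]
where $k$ ranges over the positions at which the derivative can land, namely over $j\in J_{\sigma,\rho}^+$ (giving $\Delta_\rho[x_{\sigma(j)}-1,-x_{\sigma(t-j+1)}]$ when $\lambda=x_{\sigma(j)}$), and over the tempered summand $\pi(\phi_\sigma,\ep_{\sigma,\delta})$ (using \cite[Theorem 7.1]{AM} and \cite[Lemma 7.3]{X1}). Tadi\'c's formula (Proposition \ref{Tadic}) produces additional cross-terms involving $\rho|\cdot|^{-\lambda}$; all of these vanish after applying $p_\s\circ p_{\s'}$ because they change the cuspidal support. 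What survives is a sum indexed by a pair $(\sigma,k)$ where $k$ identifies the single tensor factor on which $D_{\rho|\cdot|^\lambda}$ actually acts nontrivially.

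The heart of the proof is the construction of a sign-reversing involution $\iota$ on the set of surviving pairs $(\sigma,k)$ such that $I_\sigma^{(k)} = I_{\iota(\sigma,k)}$ after projection by $p_{\s'}$. The natural candidate: if the derivative at $(\sigma,k)$ removes the entry indexed by $i$ (so that $\sigma_\rho(i)$ takes the value $\lambda$ in the appropriate sense), locate the companion index $i'$ whose entry differs from $\lambda$ by a unit step corresponding to the unique paired removable vertex at height $-(t-2l-1)-y$ guaranteed by the symmetry of the graph; then $\iota$ swaps $\sigma_\rho(i)$ and $\sigma_\rho(i')$ (and adjusts $k$). Sign reversal is immediate from the transposition; the identity $I_\sigma^{(k)}=I_{\iota(\sigma,k)}$ follows from the symmetric role of the two removable positions. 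The exact hypothesis $D_{\rho|\cdot|^\lambda}(\pi(\LL))=0$ prevents the existence of a fixed point for $\iota$ (which would correspond precisely to an uncolored minimal vertex at $\lambda$), so the involution has no fixed points and the sum is zero.

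The hard part will be verifying that $\iota$ is well defined as a map to $\Sy(\LL)$, i.e.\ that the swapped permutation still satisfies the monotonicity conditions on the blocks $[1,l_\rho]$ and $[l_\rho+1,t_\rho-l_\rho]$ and the constraint $\sigma_\rho^{-1}(i)\le l_\rho$ for negative $x_i$. This requires a case analysis by the sign and magnitude of $\lambda$ (in particular the special values $\lambda=0$, $\lambda=-1/2$, and those $\lambda$ at which the tempered block splits over $\delta$), together with the convention at the end of \S\ref{classification} for interpreting $\rho\boxtimes S_0$. Bookkeeping for the Steinberg-derivative conventions $\Delta_\rho[x,x+1]=\1_{\GL_0(F)}$ and the cancellations within the sum over $\delta\colon J_{\sigma,\rho}^-\to\{\pm1\}$ will need to be handled with care.
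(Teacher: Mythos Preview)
Your overall strategy---commute $D_{\rho|\cdot|^\lambda}$ past $p_\s$ and then build a sign-reversing involution---is the paper's strategy as well. But the involution you describe is misspecified, and the gap is not just bookkeeping.

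The graph symmetry $(x,y)\leftrightarrow(-x,-(t-2l-1)-y)$ plays no role here; it relates $\lambda$ to $-\lambda$, not to $\lambda\pm1$, and does not produce the companion you want. The correct translation of the hypothesis is sharper than ``no uncolored minimal vertex at $\lambda$'': once some $D_{\rho|\cdot|^\lambda}[I_\sigma(\LL)]\neq0$ we have $\lambda\in\{x_1,\dots,x_t\}$, and then Proposition~\ref{der} forces either $\lambda=x_1$ with $l=0$ (immediately excluded, since then every $I_\sigma$ has zero derivative) or the existence of $2\le j\le t$ with $x_j=\lambda$ and $x_{j-1}=\lambda-1$. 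The entire involution is built from this adjacent pair $(j-1,j)$ of \emph{indices}, and the hypothesis is what guarantees $j-1$ exists---it is used to make the involution well defined, not merely to rule out fixed points.

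Second, the paper's involution acts on $\sigma$ alone, not on pairs $(\sigma,k)$: one shows directly that $D_{\rho|\cdot|^\lambda}[I_\sigma(\LL)]=D_{\rho|\cdot|^\lambda}[I_{\sigma'}(\LL)]$ in $\Rr^G$, without splitting the derivative into single-factor contributions. There are two cases. If both $j,j-1\in\sigma_\rho(\{1,\dots,l\})$, say $j-1=\sigma_\rho(i)$, $j=\sigma_\rho(i')$ with $i<i'$, one sets $\sigma'_\rho=\sigma_\rho\cdot(t-i+1,\,t-i'+1)$; the two standard modules differ only by exchanging the right endpoints of the $\Delta_\rho$-factors with left endpoints $\lambda-1$ and $\lambda$, and after applying $D_{\rho|\cdot|^\lambda}$ both left endpoints become $\lambda-1$. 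Otherwise one sets $\sigma'_\rho=(j,j-1)\cdot\sigma_\rho$. The only subtle subcase is $\{j-1,j\}=\{\sigma_\rho(i),\sigma_\rho(t-i+1)\}$ for some $i\le l$: then $i$ lies in $J^+$ for one of $\sigma,\sigma'$ and in $J^-$ for the other, and matching $D_{\rho|\cdot|^\lambda}$ of $\Delta_\rho[\lambda,-(\lambda-1)]\rtimes\pi(\phi,\ep)$ against the tempered sum over $\delta$ with $\phi'=\phi\oplus\rho\boxtimes(S_{2\lambda-1}\oplus S_{2\lambda+1})$ requires \cite[Lemma~7.3]{X1}. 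This is precisely where the direct sum over $\delta\colon J^-_{\sigma,\rho}\to\{\pm1\}$ in the definition of $I_\sigma(\LL)$ earns its keep; your sketch does not account for this crossover between the $J^+$ and $J^-$ contributions.
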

\begin{proof}
If $\rho|\cdot|^{\pm\lambda} \not\in \s$, 
then $D_{\rho|\cdot|^{\lambda}} \circ p_\s = 0$ so that the assertion is trivial. 
Hence we may assume that $\rho|\cdot|^{\pm\lambda} \in \s$. 
In this case, since $D_{\rho|\cdot|^{\lambda}} \circ p_\s = p_{\s'} \circ D_{\rho|\cdot|^\lambda}$
with $\s' = \s \setminus \{\rho|\cdot|^{\lambda}, \rho|\cdot|^{-\lambda}\}$, 
it suffices to show that 
$D_{\rho|\cdot|^{\lambda}}(\sum_{\sigma \in \Sy(\LL)} \sgn(\sigma) [I_\sigma(\LL)]) = 0$.
\par

To show this, 
we may assume that $D_{\rho|\cdot|^{\lambda}}([I_\sigma(\LL)]) \not= 0$ for some $\sigma \in \Sy(\LL)$. 
In this case, we have $\lambda \in \{x_1,\dots,x_t\}$ 
by Tadi\'c's formula (Proposition \ref{Tadic}) and \cite[Lemma 7.3]{X1}.
Since $D_{\rho|\cdot|^\lambda}([\pi(\LL)]) = 0$, 
by Proposition \ref{der}, 
one of the following holds: 
\begin{itemize}
\item
$\lambda = x_1$ and $l = 0$; or
\item
there is $2 \leq j \leq t$ such that 
$x_j = \lambda$ and $x_{j-1} = \lambda-1 = x_j-1$.
\end{itemize}
However, in the former case, we have $\sigma_\rho = \id$ for any $\sigma \in \Sy(\LL)$.
This implies that $D_{\rho|\cdot|^{\lambda}}([I_\sigma(\LL)]) = 0$, 
which is a contradiction.
Hence the latter case must occur.
\par

Suppose that $\sigma = (\sigma_{\rho'})_{\rho'} \in \Sy(\LL)$ 
satisfies that $D_{\rho|\cdot|^{\lambda}}([I_\sigma(\LL)]) \not= 0$. 
Then we note that one of $j$ or $j-1$ does not belong to $\sigma(\{l+1,\dots,t-l\})$. 
We will define $\sigma' = (\sigma'_{\rho'})_{\rho'}$ as follows. 
For $\rho' \not\cong \rho$, set $\sigma'_{\rho'} \coloneqq \sigma_{\rho'}$. 
To define $\sigma'_\rho$, we consider two cases. 

\begin{enumerate}
\item
If $j, j-1 \in \sigma(\{1,\dots,l\})$, 
writing $j-1 = \sigma(i)$ and $j = \sigma(i')$ with $1 \leq i < i' \leq l$, 
we set $\sigma'_\rho \coloneqq \sigma_\rho \cdot (t-i+1, t-i'+1)$.
Then $\sigma' \in \Sy(\LL)$. 
Moreover, 
there exists a direct sum of standard modules $\Pi$ satisfying $D_{\rho|\cdot|^\lambda}(\Pi) = 0$ 
such that 
\begin{align*}
I_\sigma(\LL) &= 
\Delta_\rho[\lambda-1, x_{\sigma(t-i+1)}] \times \Delta_\rho[\lambda, x_{\sigma(t-i'+1)}] 
\rtimes \Pi, \\
I_{\sigma'}(\LL) &= 
\Delta_\rho[\lambda-1, x_{\sigma(t-i'+1)}] \times \Delta_\rho[\lambda, x_{\sigma(t-i+1)}] 
\rtimes \Pi.
\end{align*}
Hence 
\[
D_{\rho|\cdot|^{\lambda}}([I_\sigma(\LL)])
=
\Delta_\rho[\lambda-1, x_{\sigma(t-i+1)}] \times \Delta_\rho[\lambda-1, x_{\sigma(t-i'+1)}] 
\rtimes \Pi
=
D_{\rho|\cdot|^{\lambda}}([I_{\sigma'}(\LL)]).
\]

\item
If one of $j$ or $j-1$ does not belong to $\sigma(\{1,\dots,l\})$, 
set $\sigma'_\rho \coloneqq (j,j-1) \cdot \sigma_\rho$. 
In this case, $\sigma' \in \Sy(\LL)$. 
We claim that  
\[
D_{\rho|\cdot|^{\lambda}}([I_\sigma(\LL)])
=
D_{\rho|\cdot|^{\lambda}}([I_{\sigma'}(\LL)]).
\]
This equation can be proven similarly to the first case 
unless there exists $1 \leq i \leq l$ such that
$(j-1,j) = (\sigma(i), \sigma(t-i+1))$ or $(j-1,j) = (\sigma(t-i+1), \sigma(i))$. 
By symmetry, let us consider the former case 
so that $i \in J_{\sigma,\rho}^+$ and $i \in J_{\sigma',\rho}^-$.
Hence we can find a product of Steinberg representations $\tau$
and a sequence of tempered representations $\pi(\phi,\ep_1), \dots, \pi(\phi,\ep_r)$ such that 
\begin{align*}
I_\sigma(\LL) &= \Delta[\lambda, -(\lambda-1)] \times \tau \rtimes \bigoplus_{i=1}^r \pi(\phi,\ep_r), \\
I_{\sigma'}(\LL) &= \tau \rtimes \bigoplus_{i=1}^r (\pi(\phi',\ep'_{r,+}) \oplus \pi(\phi',\ep'_{r,-})),
\end{align*}
where $\phi' = \phi \oplus (\rho \boxtimes S_{2\lambda-1} \oplus \rho \boxtimes S_{2\lambda+1})$
and 
\[
\ep'_{r,\pm}(\rho \boxtimes S_{2\lambda-1}) = \ep'_{r,\pm}(\rho \boxtimes S_{2\lambda+1}) = \pm1.
\]
Then by \cite[Lemma 7.3]{X1}, 
with $\phi_0 \coloneqq \phi \oplus (\rho \boxtimes S_{2\lambda-1})^{\oplus2}$, 
we have 
\begin{align*}
D_{\rho|\cdot|^\lambda}([I_{\sigma'}(\LL)])
&= \left[
\tau \rtimes \bigoplus_{i=1}^r (\pi(\phi_0,\ep'_{r,+}) \oplus \pi(\phi_0,\ep'_{r,-}))
\right]
\\&= 
\left[
\tau \rtimes \bigoplus_{i=1}^r (\Delta_\rho[\lambda-1,-(\lambda-1)] \rtimes \pi(\phi,\ep_r))
\right]
\\&= 
\left[
\Delta_\rho[\lambda-1,-(\lambda-1)] \times \tau \rtimes \bigoplus_{i=1}^r \pi(\phi,\ep_r) 
\right]
\\&= D_{\rho|\cdot|^\lambda}([I_{\sigma}(\LL)]), 
\end{align*}
as desired. 
\end{enumerate}
\par

Since the map $\sigma \mapsto \sigma'$ is an involution on 
\[
\{\sigma \in \Sy(\LL) \;|\; D_{\rho|\cdot|^\lambda}(I_\sigma(\LL)) \not= 0 \}
\]
satisfying $\sgn(\sigma') = -\sgn(\sigma)$ and 
$D_{\rho|\cdot|^{\lambda}}([I_\sigma(\LL)]) = D_{\rho|\cdot|^{\lambda}}([I_{\sigma'}(\LL)])$, 
we deduce that
\[
\sum_{\sigma \in \Sy(\LL)} \sgn(\sigma) D_{\rho|\cdot|^{\lambda}}([I_\sigma(\LL)]) = 0. 
\]
This completes the proof.
\end{proof}

\begin{lem}\label{lem2}
If $\lambda \in \R$ satisfies that $D_{\rho|\cdot|^\lambda}([\pi(\LL)]) \not= 0$, 
then 
\[
D_{\rho|\cdot|^\lambda}([\pi(\LL)]) = 
D_{\rho|\cdot|^{\lambda}} \circ p_\s \left(\sum_{\sigma \in \Sy(\LL)} \sgn(\sigma) [I_\sigma(\LL)] \right), 
\]
where $\s = \supp(\pi(\LL))$.
\end{lem}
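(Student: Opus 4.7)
The natural approach is induction on $n$, reducing the lemma to the inductive hypothesis applied to the smaller ladder datum $\LL'$ produced by the derivative. Since $D_{\rho|\cdot|^\lambda}([\pi(\LL)]) \neq 0$, Proposition \ref{der} gives $D_{\rho|\cdot|^\lambda}([\pi(\LL)]) = [\pi(\LL')]$ for a ladder datum $\LL'$ whose graph $\EE(\LL'_\rho)$ is obtained from $\EE(\LL_\rho)$ by removing a minimal vertex $(\lambda,y)$ with $f_\rho(\lambda,y)=0$ and its mirror $(-\lambda,-(t-2l-1)-y)$, while $\LL'_{\rho'}=\LL_{\rho'}$ for $\rho'\not\cong\rho$. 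Applying the inductive hypothesis to $\LL'$ and the commutation $D_{\rho|\cdot|^\lambda}\circ p_\s=p_{\s'}\circ D_{\rho|\cdot|^\lambda}$ recorded in \S\ref{rep} (with $\s'=\s\setminus\{\rho|\cdot|^\lambda,\rho|\cdot|^{-\lambda}\}$), the lemma reduces to the identity
\[
p_{\s'}\Bigl(\sum_{\sigma \in \Sy(\LL)} \sgn(\sigma)\, D_{\rho|\cdot|^\lambda}([I_\sigma(\LL)])\Bigr)
= p_{\s'}\Bigl(\sum_{\sigma' \in \Sy(\LL')} \sgn(\sigma')\, [I_{\sigma'}(\LL')]\Bigr).
\]

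To prove this identity I would compute $D_{\rho|\cdot|^\lambda}([I_\sigma(\LL)])$ for each $\sigma\in\Sy(\LL)$ via Tadi{\'c}'s formula (Proposition \ref{Tadic}) together with \cite[Lemma 7.3]{X1}. The derivative can act on any tensor factor of $I_\sigma(\LL)$: either on a Steinberg factor $\Delta_\rho[x_{\sigma_\rho(j)},-x_{\sigma_\rho(t-j+1)}]$ (when $x_{\sigma_\rho(j)}=\lambda$), or on the tempered direct summands $\pi(\phi_\sigma,\ep_{\sigma,\delta})$ (when $\lambda$ coincides with some $x_{\sigma_\rho(i)}$ or arises by M\oe glin-type interaction of adjacent characters). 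Only the entries of $\sigma$ pointing to the indices $i$ with $x_i=\lambda$ can contribute to the derivative, by the minimality of $(\lambda,y)$ in $\EE(\LL_\rho)$.

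The heart of the argument is to split $\Sy(\LL)$ as $\Sy(\LL)=A\sqcup B$, where $A$ bijects (in a sign-preserving way) with $\Sy(\LL')$ and $B$ decomposes into pairs under an involution. Concretely, I would define $\Phi\colon A\to\Sy(\LL')$ by deleting from $\sigma_\rho$ the entry pointing to the vertex removed in $\EE(\LL_\rho)\rightsquigarrow\EE(\LL'_\rho)$ and relabelling, and verify that this matches $\sgn$ and sends $D_{\rho|\cdot|^\lambda}([I_\sigma(\LL)])$ to $[I_{\Phi(\sigma)}(\LL')]$ after projecting by $p_{\s'}$. For the set $B$, the involution sends $\sigma$ to a $\sigma''$ obtained by swapping the two indices of $\sigma$ corresponding to the two admissible ways of extracting $\rho|\cdot|^\lambda$ from $I_\sigma(\LL)$; the same bookkeeping as in the proof of Lemma \ref{lem1} (including the case where the swap exchanges a $J_{\sigma,\rho}^+$-pair with a $J_{\sigma,\rho}^-$-pair) shows that $\sgn(\sigma'')=-\sgn(\sigma)$ and $D_{\rho|\cdot|^\lambda}([I_\sigma(\LL)])=D_{\rho|\cdot|^\lambda}([I_{\sigma''}(\LL)])$, so these contributions cancel.

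The main obstacle will be the combinatorial case analysis governed by the position of the removable vertex $(\lambda,y)$ in the graph: it may lie in a ``segment row'' (affecting one of the $\Delta_\rho[x_j,-x_{t-j+1}]$ factors and possibly decreasing $l_\rho$), in a ``tempered row'' (modifying $\phi$ without changing $l_\rho$), or at the distinguished boundary positions where $x_{l+1}=-1/2$ or $2l=t$. Each regime changes in a different way the allowed permutations appearing in $\Sy(\LL')$, and one must also verify that the projection $p_{\s'}$ correctly annihilates the spurious contributions, including ``ghost'' summands $\pi(\phi',\ep')$ that become zero because $\ep'(\rho\boxtimes S_0)=-1$. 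Bookkeeping the $\delta$-sum over $J_{\sigma,\rho}^-$ under derivation, and comparing it term by term with the analogous sum for $\sigma'\in\Sy(\LL')$, is the most delicate part of the verification.
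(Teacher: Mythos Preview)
Your overall framework (induction on $n$, the commutation $D_{\rho|\cdot|^\lambda}\circ p_\s=p_{\s'}\circ D_{\rho|\cdot|^\lambda}$, and reducing to the identity between the two $p_{\s'}$-projected sums) coincides with the paper's proof. What you miss is the single observation that makes the remaining step trivial: since $D_{\rho|\cdot|^\lambda}([\pi(\LL)])\neq 0$, Proposition~\ref{der} forces $\lambda\in\{x_1,\dots,x_t\}$ \emph{and} $\lambda-1\notin\{x_1,\dots,x_t\}$. Consequently the paper takes $\LL'=(\ub{X'},\ub{l},\ub{\eta})$ with $X'_\rho=(X_\rho\setminus\{\rho|\cdot|^\lambda\})\cup\{\rho|\cdot|^{\lambda-1}\}$: the parameters $(t_\rho,l_\rho,\eta_\rho)$ do not change at all. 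From Definition~\ref{def:sgn} one then has $\Sy(\LL')=\Sy(\LL)$ as literally the same subset of $\oplus_{\rho'}\Sy_{t_{\rho'}}$, and for every $\sigma$ the derivative hits the unique factor of $I_\sigma(\LL)$ whose top exponent is $\lambda$, giving $D_{\rho|\cdot|^\lambda}([I_\sigma(\LL)])=[I_\sigma(\LL')]$ on the nose. No bijection, no involution, no case analysis is needed.

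Your proposed machinery (the split $\Sy(\LL)=A\sqcup B$, the relabelling bijection $\Phi$, the sign-reversing involution on $B$, the discussion of ``segment rows'' versus ``tempered rows'', and the worry that $l_\rho$ might drop) is therefore aimed at a phenomenon that does not occur here: removing the two vertices in Proposition~\ref{der} amounts, in terms of $X_\rho$, to shifting one $x_j$ down by $1$, not to deleting an element or shortening a row. In your notation $B=\emptyset$ and $\Phi=\id$. The delicate cancellation argument you sketch is exactly what the paper carries out in Lemma~\ref{lem1} (the vanishing case), where indeed $\lambda-1\in\{x_1,\dots,x_t\}$; but in Lemma~\ref{lem2} that situation is excluded by hypothesis, and the proof collapses to a two-line computation.
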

\begin{proof}
By Proposition \ref{der}, we have
$\lambda \in \{x_1,\dots,x_t\}$ and $\lambda-1 \not\in \{x_1,\dots,x_t\}$.
Moreover, if we define $\LL' = (\ub{X'},\ub{l},\ub{\eta})$ from $\LL$ by 
replacing $X_\rho$ with 
\[
X'_\rho \coloneqq (X_\rho \setminus \{\rho|\cdot|^\lambda\}) \cup \{\rho|\cdot|^{\lambda-1}\}, 
\]
then $D_{\rho|\cdot|^\lambda}([\pi(\LL)]) = [\pi(\LL')]$. 
By Definition \ref{def:sgn}, we can see that 
$\Sy(\LL') = \Sy(\LL)$ and $D_{\rho|\cdot|^{\lambda}}([I_\sigma(\LL)]) = [I_\sigma(\LL')]$. 
Finally, if we set $\s' = \supp(\pi(\LL'))$, 
then $D_{\rho|\cdot|^\lambda} \circ p_\s = p_{\s'} \circ D_{\rho|\cdot|^{\lambda}}$.
Since we know the assertion of Theorem \ref{determinantal} for $\pi(\LL')$ 
by the induction hypothesis, 
we have 
\begin{align*}
D_{\rho|\cdot|^\lambda}([\pi(\LL)]) 
&= [\pi(\LL')]
\\&= p_{\s'} \left(\sum_{\sigma \in \Sy(\LL')} \sgn(\sigma) [I_\sigma(\LL')]\right)
\\&= p_{\s'} \circ D_{\rho|\cdot|^\lambda} \left(\sum_{\sigma \in \Sy(\LL)} \sgn(\sigma) [I_\sigma(\LL)]\right)
\\&= D_{\rho|\cdot|^{\lambda}} \circ p_\s \left(\sum_{\sigma \in \Sy(\LL)} \sgn(\sigma) [I_\sigma(\LL)] \right), 
\end{align*}
as desired. 
\end{proof}

Let $P = MN$ be the standard parabolic subgroup of $G_n$
such that $\pi(\LL)$ is an irreducible subquotient of $\Ind_P^{G_n}(\pi_M)$ 
for some $\pi_M \in \Cusp(M)$.
Then by applying Lemmas \ref{lem1} and \ref{lem2} repeatedly, 
we see that 
\[
\Jac_P([\pi(\LL)])
= \Jac_P\left(p_\s \left(\sum_{\sigma \in \Sy(\LL)} \sgn(\sigma) [I_\sigma(\LL)] \right)\right).
\]
Since $[\pi(\LL)]$ can be written as a linear combination of standard modules 
with the same cuspidal support as $\pi(\LL)$, 
this equation implies that 
\[
[\pi(\LL)] = p_\s \left(\sum_{\sigma \in \Sy(\LL)} \sgn(\sigma) [I_\sigma(\LL)] \right).
\]
This completes the proof of Theorem \ref{determinantal}.


\end{document}